\newcommand{\set}[1]{\left\{#1\right\}}
\newcommand{\abs}[1]{\left|#1\right|}
\newcommand{\CA}{\mathcal{A}}
\newcommand{\CB}{\mathcal{B}}
\newcommand{\CC}{\mathcal{C}}
\newcommand{\CF}{\mathcal{F}}
\newcommand{\CG}{\mathcal{G}}
\newcommand{\CM}{\mathcal{M}}
\newcommand{\CT}{\mathcal{T}}
\newcommand{\CU}{\mathcal{U}}
\newcommand{\CP}{\mathcal{P}}
\newcommand{\CQ}{\mathcal{Q}}
\newcommand{\CR}{\mathcal{R}}
\newcommand{\CS}{\mathcal{S}}
\newcommand{\CW}{\mathcal{W}}
\newcommand{\CAF}{\mathcal{A}_\mathcal{F}}
\newcommand{\fr}{\mathsf{fr}}
\newcommand{\Fo}{F{\o}lner }
\newcommand{\sd}{\bigtriangleup}
\newcommand{\eps}{\varepsilon}
\DeclareMathOperator{\Int}{Int}
\newtheorem{thm}{Theorem}[section]
\newtheorem*{thm*}{Theorem}
\newtheorem{lem}[thm]{Lemma}
\newtheorem*{lem*}{Lemma}
\newtheorem*{cor*}{Corollary}
\newtheorem*{prop*}{Proposition}
\theoremstyle{definition}
\newtheorem{defn}[thm]{Definition}
\newtheorem*{defn*}{Definition}
\theoremstyle{remark}
\newtheorem*{rem*}{Remark}
\newtheorem{fct}[thm]{Fact}
\newtheorem*{fct*}{Fact}
\newtheorem*{example*}{Example}
\newtheorem*{que*}{Question}
\begin{document}

\title{Zero-dimensional extensions of amenable group actions}
%\subtitle{Do you have a subtitle?\\ If so, write it here}

%\titlerunning{Short form of title}        % if too long for running head

\author{Dawid Huczek}
\address{\hskip- \parindent
    Dawid Huczek, Faculty of Pure and Applied Mathematics, Wroclaw University of Science and Technology, Wybrze\.ze Wyspia\'nskiego 27, 50-370 Wroc\l aw, Poland}
\email{dawid.huczek@pwr.edu.pl}
%\authorrunning{Short form of author list} % if too long for running head

% The correct dates will be entered by the editor

\begin{abstract}
We prove that every dynamical system $X$ with free action of a countable amenable group $G$ by homeomorphisms has a zero-dimensional extension $Y$ which is faithful and principal, i.e. every $G$-invariant measure $\mu$ on $X$ has exactly one preimage $\nu$ on $Y$ and the conditional entropy of $\nu$ with respect to $X$ is zero. This is a version of the result in \cite{DH2} which establishes the existence of zero-dimensional principal and faithful extensions for general actions of the group of integers.
%\keywords{Principal extension\and entropy \and amenable group action}
% \PACS{PACS code1 \and PACS code2 \and more}
\end{abstract}
\maketitle

\section{Introduction}
\label{sec:intro}
In this paper we prove the existence of a zero-dimensional, principal and faithful extension for any dynamical systems with free action of an amenable group $G$. This is a version of the result proved in \cite{DH2} --- it is not strictly a strengthening, since while we do generalize theorem 3.1 of said paper for the case of any amenable group, we need to assume that the action of the group be free. The present paper is self-contained, but the reader may want to refer to the proof in \cite{DH2} or \cite{DH1}, where similar ideas are implemented in a simpler setting.

\section{Preliminaries}
\label{sec:prelim}
\subsection{Basic properties of amenable group actions}
A discrete, countable group $G$ is an $\emph{amenable group}$ if it contains a \Fo sequence, i.e. a sequence of finite sets $F_n \subset G$ such that for any $g\in G$  
\[\lim_{n \to \infty}\frac{\abs{F_n \sd gF_n}}{\abs{F_n}}=0,\]
where $gF=\set{gf:f\in F}$, $\abs{\cdot}$ denotes the cardinality of a set, and $\sd$ is the symmetric difference. The elements of the \Fo sequence will be called \emph{\Fo sets}. Without loss of generality (see \cite[Corollary 5.3]{N}) we can assume that the sets in the F\o lner sequence are symmetric (i.e. $F_n^{-1}=F_n$ for every $n$) and contain the neutral element of $G$.

When $F$ and $E$ are finite subsets of $G$ and $0<\delta<1$, we say that $F$ is \emph{$(E,\delta)$-invariant} if
\[\frac{\abs{F\sd EF}}{\abs{F}}\leq \delta,\]
where $EF=\set{gf:g \in E,f\in F}$. Note that if $E$ contains the neutral element of $G$ (which we will be denoting by $e$), then $(E,\delta)$-invariance is equivalent to a simpler condition:
\[\abs{EF}\leq (1+\delta)\abs{F}.\]

Also observe that if $(F_n)$ is the \Fo sequence, then for any finite $E \subset G$ and any $\delta>0$ there exists an $N$ such that for $n>N$ the set $F_n$ is $(E,\delta)$-invariant. 

\begin{defn}
    For a finite $T\subset G$ and $\eps\in [0,1]$ we say that $T'\subset T$ is a $(1-\eps)$-subset of $T$ if $\abs{T'}\geq(1-\eps)\abs{T}$.
\end{defn}

\begin{defn}
Let $K$ be a finite subset of $G$ and let $T\subset G$ be arbitrary. The \emph{$K$-core} of $T$, denoted as $T_K$, is the set $\set{g\in T: Kg\subset T}$ (this is the largest subset $T'\subset T$ satisfying $KT'\subset T$).
\end{defn}

\begin{lem}\label{lem:large_core}
For any $\eps>0$ and any finite $K\subset G$ there exists a $\delta$ (in fact $\delta = \frac\eps{|K|}$), such that if $T\subset G$ is finite and $(K,\delta)$-invariant then the $K$-core $T_K$ is a $(1-\eps)$-subset of $T$.
\end{lem}

\begin{proof} Note that $(K,\delta)$-invariance of $T$ implies that
$$(\forall g\in K)\ \ |gT\setminus T|<\delta|T|,$$
i.e.,
$$(\forall g\in K)\ \ |T\cap g^{-1}T| = |gT\cap T| >(1-\delta)|T|.
$$
This yields
$$|T_K|=\left|\bigcap_{g\in K}(T\cap g^{-1}T)\right|>(1-|K|\delta)|T|=(1-\eps)|T|.$$
\end{proof}
\begin{defn}
    Let $A$ and $B$ be subsets of $G$. We say that $A$ \emph{lies on the boundary} of $B$ if $A\cap B\neq\emptyset$ and $A\setminus B\neq\emptyset$. 
\end{defn}

\subsubsection{Lower Banach density}
\begin{defn}Let $H \subset G$. For any finite $F \subset G$ set
    \[D_F(H)=\inf_{g \in G}\frac{\abs{H \cap Fg}}{\abs{F}}.\]
    The value
    \[D(H)=\sup\set{D_F(H):F \subset G, \abs{F}<\infty}\]
    is called the \emph{lower Banach density} of $H$.
\end{defn}
\begin{lem}
    If $(F_n)$ is the \Fo sequence then for any $H \subset G$
    \[D(H)=\lim_{n \to \infty}D_{F_n}(H).\]
\end{lem}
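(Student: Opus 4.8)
The plan is to show the two inequalities $D(H) \ge \limsup_n D_{F_n}(H)$ and $D(H) \le \liminf_n D_{F_n}(H)$, which together force the limit to exist and equal $D(H)$. The first inequality is immediate from the definition of $D(H)$ as a supremum over \emph{all} finite sets: for every $n$ we have $D_{F_n}(H) \le D(H)$, hence $\limsup_n D_{F_n}(H) \le D(H)$. So the whole content of the lemma lies in the reverse inequality: given an arbitrary finite $F \subset G$, I must show that $D_F(H) \le \liminf_n D_{F_n}(H)$, i.e. that for large $n$ the averages over translates of $F_n$ cannot be much smaller than $D_F(H)$.

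The key step is a covering/averaging argument exploiting the almost-invariance of $F_n$. Fix a finite $F$ with $e \in F$ (using the normalization from the preliminaries we may assume this, or just enlarge $F$), and fix $\delta > 0$. Choose $n$ large enough that $F_n$ is $(F^{-1},\delta)$-invariant, where $F^{-1} = \{f^{-1} : f \in F\}$. Now fix an arbitrary $g \in G$ and estimate $\abs{H \cap F_n g}$ from below. The idea is to tile (most of) $F_n g$ by right-translates $Fg'$ with $g' \in F_n g$, or more precisely to use the double-counting identity
\[
\sum_{g' \in F_n g} \abs{H \cap Fg'} = \sum_{h \in H} \abs{\{g' \in F_n g : h \in Fg'\}} = \sum_{h \in H} \abs{F^{-1}h \cap F_n g}.
\]
For $h \in H \cap F_n g$ (away from the boundary), the set $F^{-1}h$ lies entirely in $(F^{-1}F_n)g$, which differs from $F_n g$ by at most $\delta\abs{F_n}$ elements by almost-invariance; so each such $h$ contributes roughly $\abs{F}$ to the right-hand side. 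On the left-hand side, each term is $\abs{H \cap Fg'} \ge D_F(H)\abs{F}$ by definition. Comparing the two sides yields $\abs{H \cap F_n g} \ge (D_F(H) - \delta')\abs{F_n}$ for a small error $\delta'$ that tends to $0$ with $\delta$; taking the infimum over $g$ and then $n \to \infty$ gives $\liminf_n D_{F_n}(H) \ge D_F(H) - \delta'$, and letting $\delta \to 0$ finishes the proof.

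The main obstacle is bookkeeping the boundary terms correctly: one must be careful about which translates are being used (left versus right), and about the fact that elements $h$ near the "boundary" of $F_n g$ may have $F^{-1}h$ sticking out of $F_n g$, so they contribute less than $\abs{F}$ to the double sum — but there are at most $\delta\abs{F_n}$ such elements, which is exactly what almost-invariance controls. A secondary subtlety is ensuring the estimate is uniform in $g$, but since almost-invariance of $F_n$ is a translation-invariant property (it concerns $\abs{F_n \sd F^{-1}F_n}$, unaffected by replacing $F_n$ with $F_n g$), the bound holds with the same constants for every $g$, so passing to the infimum causes no trouble. Once the uniform bound is in hand, the rest is just taking limits.
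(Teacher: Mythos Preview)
Your proposal is correct and follows essentially the same double-counting argument as the paper: both count pairs $(f',f)\in F\times F_n$ with $f'fg\in H$, bound this count below by $D_F(H)\,\abs{F}\,\abs{F_n}$, and then use almost-invariance to pass from $\abs{H\cap FF_ng}$ back to $\abs{H\cap F_ng}$ --- the paper applies pigeonhole to extract a single good $f'\in F$ whereas you keep the full sum, but the resulting estimate $\abs{H\cap F_ng}\ge (D_F(H)-\delta)\abs{F_n}$ is identical. One minor slip in your bookkeeping: to bound the contributions from $h\notin F_ng$ you need $\abs{FF_ng\setminus F_ng}\le\delta\abs{F_n}$, i.e.\ $(F,\delta)$-invariance of $F_n$, not the $(F^{-1},\delta)$-invariance you wrote.
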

\begin{proof}
    Let $\delta>0$ and let $F\subset G$ be a finite set such that $D_F(H)\ge D(H)-\delta$. Let $n$ be so large that $F_n$ is $(F,\delta)$-invariant. For $g\in G$ we have
    \[\abs{H\cap Ffg}\ge D_F(H)\abs{F}\]
    for any $f\in F_n$. Therefore there are at least $D_F(H)\abs{F}\abs{F_n}$ pairs $(f',f)$ such that $f'\in F,f\in F_n$ and $f'fg\in H$. This implies that for at least one $f'\in F$ we have
    \[\abs{H\cap f'F_ng}\ge D_F(H)\abs{F_n}.\]
    Since $F_n$ is $(F,\delta)$-invariant (and thus so is $F_ng$) and $f'\in F$, we have the inequality
    \[\abs{H\cap f'F_ng} \le \abs{H\cap FF_ng} \le \abs{H\cap F_ng}+\delta\abs{F_n},\]
    and therefore
    \[\abs{H\cap F_ng}\ge (D_F(H)-\delta)\abs{F_n}.\]
    We have shown that $D_{F_n}(H)\ge D_F(H)-\delta\ge D(H)-2\delta$ which concludes the proof.
\end{proof}
\subsection{Actions of amenable groups}
Let $X$ be a compact metric space and $G$ an amenable group. A \emph{continuous action} of $G$ on $X$ is a homomorphism $\Theta:G \to \text{Homeo}(X)$, i.e. a mapping such that $\Theta(gh)=\Theta(g)\circ\Theta(h)$. For the sake of concision, in case of a single, fixed group action, we do not write $\Theta(g)(x)$, but rather $g(x)$ or even $gx$. The set $X$ with the continuous action of $G$ is called a topological dynamical system.

The action of $G$ on $X$ is called \emph{free} if no point of $X$ is fixed by no element of $G$, i.e. if
\[\forall_{x\in X}\forall_{g \in G}\left(gx= x \implies g=e\right).\]

By $\CM(X)$ we will mean the set of all Borel probability measures on $X$, and by $\CM_{\Theta}(X)$ we mean the subset of $\CM(X)$ consisting of $\Theta$-invariant measures, i.e $\mu\in \CM_{\Theta}(X)$ if and only if for every Borel set $B\subset X$ and every $g\in G$ we have $\mu(g(B)))=\mu(B)$. 

If $(X,\Theta^{(X)})$ and $(Y,\Theta^{(Y)})$ are topological dynamical systems with the actions of the same group $G$, we say that $Y$ is an extension of $X$ (or, equivalently, that $X$ is a factor of $Y$), if there exists a continuous map $\pi$ from $Y$ onto $X$ which commutes with the actions, i.e. for any $g\in G$ we have$\pi(gy)=g(\pi(y))$. Such a map $\pi$ induces a continuous affine map from $\CM_{\Theta^{(Y)}}(Y)$ onto $\CM_{\Theta^{(X)}}(X)$, which we will also denote by $\pi$. We say that an extension is \emph{faithful} if the corresponding map between the simplices of invariant measures is injective, i.e. any invariant Borel probability measure on $X$ has exactly one preimage on $Y$.

\subsection{Symbolic and array systems}
Symbolic dynamical systems for group actions can be defined similarly to the case of a single action: Let $\Lambda$ be the \emph{alphabet}, that is a finite set with discrete topology. Let $Z=\Lambda^G$ and define the following action of $G$ on $Z$:
\[\forall_{g,h\in G}\forall_{x \in Z}\,\,gx(h)=x(hg).\]
Since $(g_1g_2)x(h)=x(hg_1g_2)=g_2x(hg_1)=g_1(g_2x)(h)$, this is indeed a continuous action of $G$ on $Z$. A \emph{symbolic dynamical system} is any closed set $X \subset Z$ that is invariant under the action of $G$ i.e. $g(X)= X$ for all $g \in G$.

Let $Q$ be a block over the alphabet $\Lambda$ defined by a finite set of coordinates $A \subset G$, i.e. a function from $A$ into $\Lambda$, and let $P$ be a block over the same alphabet determined by a finite set of coordinates $B$. The frequency of $Q$ in $P$ is defined as
\[\fr_P(Q)=\frac{1}{\abs{P}}\abs{g\in B:Ag\subset B \wedge P(Ag)=Q},\]

An array dynamical system is defined in a similar manner: let $\Lambda_k$ be a finite set with discrete topology and let $Y_\mathsf{arr}=\prod_{k=1}^\infty\Lambda_k^G$. The sets of $Y_\mathsf{arr}$ can be interpreted as arrays with ,,rows'' (or rather layers) indexed by elements of $G$ and columns indexed by natural numbers, i.e. arrays of the form $\set{y_{k,g}}_{k\ge 1,g\in G}$, where $y_{k,g} \in \Lambda_k$. With the action of the horizontal shift $\Psi$ (defined as $\Psi(g)(y)_{k,h}=y_{k,hg}$) the set $Y_\mathsf{arr}$ becomes a zero-dimensional dynamical system. An \emph{array dynamical system} is any closed, $\Psi$-invariant subset $Y$ of such a $Y_\mathsf{arr}$. Any action of an amenable group on a zero-dimensional system can be represented as an array system.

\subsection{Entropy}

We recall the basic definitions and facts of the entropy theory of amenable group actions. Let $X$ be a compact metric space, $\Theta$ be a continuous action of an amenable group $G$ and let $\mu \in \CM_\Theta(X)$. For any finite partition $\mathcal{A}$ of $X$ into measurable sets we define the entropy of a partition as
\[H(\mu,\mathcal{A})=-\sum_{A \in \mathcal{A}}\mu(A)\ln\mu(A),\]
as well as the expression
\[H_n(\mu,\mathcal{A})=\frac{1}{\abs{F_n}}H(\mu,\bigvee_{g\in F_n}g^{-1}(\mathcal{A})).\]

The sequence $H_n$ is known to converge to its infimum, which allows one to define
\[h(\mu,\mathcal{A})=\lim H_n(\mu,\mathcal{A}).\]
Finally the \emph{entropy of a measure} is given as
\[h(\mu)=\sup_{\mathcal{A}}h(\mu,\mathcal{A}),\]
where the supremum is taken over all finite partitions of $X$.

If $\mathcal{A}$ and $\mathcal{B}$ are two finite partitions of $X$, then we can define the conditional entropy of $\mathcal{A}$ with respect to $\mathcal{B}$ as
\[H(\mu,\mathcal{A}|\mathcal{B})=\sum_{B \in \mathcal{B}}\mu(B)H(\mu_B,\mathcal{A}),\]
where $\mu_B(A)\in \CM(X)$ is defined by $\mu_B(A)=\mu(A|B)$ for all Borel sets. Then we can proceed to define
\[H_n(\mu,\mathcal{A}|\mathcal{B})=\frac{1}{n}H(\mu,\bigvee_{g\in F_n}g^{-1}(\mathcal{A})|\bigvee_{g\in F_n}g^{-1}(\mathcal{B})).\]
Again, the sequence $H_n$ is known to converge to its infimum, which allows one to define
\[h(\mu,\mathcal{A}|\mathcal{B})=\lim H_n(\mu,\mathcal{A}|\mathcal{B}).\]
Suppose now that we have a system $(Y,\Psi)$ that is an extension of $(X,\Theta)$ by a map $\pi$. Let $\nu$ be an invariant measure on $Y$. Any partition $\mathcal{B}$ of $X$ can be lifted to a partition $\pi^{-1}\mathcal{B}$ of $Y$. Define
\[h(\nu,\mathcal{A}|X)=\inf_{\mathcal{B}} h(\nu,\mathcal{A}|\pi^{-1}\mathcal{B}),\]
where the infimum is taken over all finite partitions of $X$. Finally, define
\[h(\nu|X)=\sup_{\mathcal{A}}h(\nu,\mathcal{A}|X),\]
where once again the supremum is taken over all finite partitions of $Y$. If the image $\pi\nu$ of $\nu$ by the factor map $\pi$ has finite entropy, then it is not difficult to see that $h(\nu|X)=h(\nu)-h(\pi\nu)$.

We will make use of the following fact.
\begin{fct}
    Let $Y$ be an array system and let $X$ be a factor of $Y$. Let $\mathcal{R}_k$ be the partition of $Y$ defined by cylinders of height $k$ determined by the coordinate $e$. Then for any measure $\nu$ on $Y$ we have $h(\nu|X)=\lim_k h(\nu,\mathcal{R}_k|X)$.
\end{fct}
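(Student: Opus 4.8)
The plan is to prove the two inequalities $h(\nu|X)\ge\lim_k h(\nu,\CR_k|X)$ and $h(\nu|X)\le\lim_k h(\nu,\CR_k|X)$ separately. Since a cylinder of height $k$ at the coordinate $e$ refines one of smaller height, the partitions $\CR_k$ increase with $k$, and $h(\nu,\cdot\,|X)$ is monotone under refinement of its partition argument (refining the first argument can only raise each conditional entropy $H(\bigvee_{g\in F_n}g^{-1}(\CR_k)\,|\,\bigvee_{g\in F_n}g^{-1}(\pi^{-1}\CB))$, hence the limit in $n$ and then the infimum over $\CB$); so $h(\nu,\CR_k|X)$ is nondecreasing and $\lim_k h(\nu,\CR_k|X)=\sup_k h(\nu,\CR_k|X)$. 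As each $\CR_k$ is a finite partition of $Y$ we get $h(\nu,\CR_k|X)\le\sup_\CA h(\nu,\CA|X)=h(\nu|X)$, which already yields $\lim_k h(\nu,\CR_k|X)\le h(\nu|X)$.

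For the reverse inequality, fix a finite partition $\CA$ of $Y$ and $\eps>0$, and put $\CD_n=\bigvee_{g\in F_n}g^{-1}(\CR_n)$. These partitions increase with $n$; since $Y$ is zero-dimensional its cylinders form a basis of the topology, and because $(F_n)$ exhausts $G$ and $(\CR_n)$ is refining, the $\CD_n$ generate the Borel $\sigma$-algebra of $Y$. By the martingale convergence theorem the static conditional entropies $H(\nu,\CA|\CD_n)$ then tend to $0$, so we may fix $N$ with $H(\nu,\CA|\CD_N)<\eps$ and set $\CC=\CD_N=\bigvee_{g\in F_N}g^{-1}(\CR_N)$; note that $e\in F_N$, so $\CC$ refines $\CR_N$.

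The first estimate I would establish is
\[h(\nu,\CA|X)\le h(\nu,\CC|X)+H(\nu,\CA|\CC).\]
Fix a finite partition $\CB$ of $X$ and abbreviate $\mathcal{S}_n=\bigvee_{g\in F_n}g^{-1}(\pi^{-1}\CB)$. Since $\CA\vee\CC$ refines $\CA$, we have $H\big(\nu,\bigvee_{g\in F_n}g^{-1}(\CA)\,\big|\,\mathcal{S}_n\big)\le H\big(\nu,\bigvee_{g\in F_n}g^{-1}(\CA\vee\CC)\,\big|\,\mathcal{S}_n\big)$, and the chain rule bounds the right-hand side by $H\big(\nu,\bigvee_{g\in F_n}g^{-1}(\CC)\,\big|\,\mathcal{S}_n\big)+H\big(\nu,\bigvee_{g\in F_n}g^{-1}(\CA)\,\big|\,\bigvee_{g\in F_n}g^{-1}(\CC)\big)$; by subadditivity of conditional entropy in the first argument and the $G$-invariance of $\nu$, the last term is at most $\abs{F_n}H(\nu,\CA|\CC)$. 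Dividing by $\abs{F_n}$, letting $n\to\infty$, and then taking the infimum over $\CB$ (the correction term $H(\nu,\CA|\CC)$ does not involve $\CB$) yields the estimate. The second estimate is $h(\nu,\CC|X)\le h(\nu,\CR_N|X)$: for any finite $\CB$ one has $\bigvee_{g\in F_n}g^{-1}(\CC)=\bigvee_{u\in F_NF_n}u^{-1}(\CR_N)$, and since $e\in F_N$ the \Fo property gives $\abs{F_NF_n\setminus F_n}/\abs{F_n}\to0$; splitting off the atoms indexed by $F_NF_n\setminus F_n$ changes the conditional entropy by at most $\abs{F_NF_n\setminus F_n}\ln\abs{\CR_N}=o(\abs{F_n})$, so after dividing by $\abs{F_n}$ and passing to the limit, $h(\nu,\CC|\pi^{-1}\CB)\le h(\nu,\CR_N|\pi^{-1}\CB)$, and the infimum over $\CB$ gives the claim. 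Combining the two estimates, $h(\nu,\CA|X)\le h(\nu,\CR_N|X)+\eps\le\lim_k h(\nu,\CR_k|X)+\eps$; letting $\eps\to0$ and taking the supremum over $\CA$ gives $h(\nu|X)\le\lim_k h(\nu,\CR_k|X)$.

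I expect the only real work to be bookkeeping: tracking which $\sigma$-algebra one conditions on through the two reductions, and checking that the infimum over partitions $\CB$ of $X$ may be pulled outside each estimate (which works precisely because the correction terms are static and $\CB$-free), together with the routine \Fo computation $\abs{F_NF_n\setminus F_n}=o(\abs{F_n})$ and the standard fact that an increasing sequence of finite partitions generating the Borel $\sigma$-algebra drives $H(\nu,\CA|\cdot)$ to $0$. No idea beyond the classical Kolmogorov--Sinai argument is required; the factor $X$ is simply carried along in the conditioning throughout.
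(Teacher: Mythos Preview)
Your proof is correct and follows essentially the same approach as the paper: both rest on the two observations that the $\CR_k$ are increasing (hence $h(\nu,\CR_k|X)$ is nondecreasing) and that together with their group translates they generate the Borel $\sigma$-algebra of $Y$, after which the conclusion is the standard Kolmogorov--Sinai generator argument carried through with the extra conditioning on $X$. The paper merely states these two facts and leaves the rest implicit, whereas you have written out the full chain-rule and \Fo bookkeeping; your argument is a legitimate and complete expansion of the paper's sketch.
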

To see that it is so, it suffices to observe two facts. Firstly, that the family $\set{\mathcal{R}_k}$ together with its images under the group elements generates the Borel $\sigma$-algebra on $Y$. Secondly, if $l<k$ then $\CR_l\prec \CR_k$, and therefore $h(\nu,\CR_l|X)<h(\nu,\CR_k|X)$.

We recall a key definition:
\begin{defn}Suppose a dynamical system $Y$ an extension of the system $X$ via the map $\pi$. $Y$ is a \emph{principal extension} if $h(\nu|X)=0$ for every invariant measure $\nu$ on $Y$.
\end{defn}
If $X$ has finite topological entropy, then by the variational principle $\pi\nu$ has finite entropy for each invariant measure $\nu$ on $Y$ so the extension is principal if and only if $h(\nu)=h(\pi\nu)$ for each $\nu$. This implies that $Y$ has the same topological entropy as $X$ (this holds also in case of infinite entropy).
\subsection{Continuity of the entropy functions}
In the main proof we will consider entropy as a function of the measure, and we will need several basic facts about the continuity of this function. First of all: 
\begin{fct}The function $\mu \mapsto \mu(A)$ on $\CM(X)$ is upper semicontinuous if $A$ is closed and lower semicontinuous if $A$ is open.
\end{fct} Since $\mu(\Int(A))\leq \mu(A) \leq \mu(\overline{A})$ and the three are equal if the boundary of $A$ has measure $0$, we have the following:
\begin{fct}The function $\mu \mapsto \mu(A)$ on $\CM(X)$ is continuous at every $\mu$ such that $\mu(\partial A)=0$.
\end{fct}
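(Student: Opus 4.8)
The plan is to deduce this directly from the preceding Fact by a two-sided sandwiching estimate. First I would record the purely set-theoretic inclusions $\Int(A)\subseteq A\subseteq\overline{A}$ together with the identity $\overline{A}\setminus\Int(A)=\partial A$. Consequently, for the particular measure $\mu$ in the statement, the hypothesis $\mu(\partial A)=0$ forces
\[\mu(\Int(A))=\mu(A)=\mu(\overline{A}).\]

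Next, since $\Int(A)$ is open and $\overline{A}$ is closed, the preceding Fact tells us that $\nu\mapsto\nu(\Int(A))$ is lower semicontinuous on $\CM(X)$ and $\nu\mapsto\nu(\overline{A})$ is upper semicontinuous on $\CM(X)$. Now take any sequence $\nu_m\to\mu$ in $\CM(X)$; this is enough to test continuity, because $\CM(X)$ with the weak-$*$ topology is metrizable when $X$ is a compact metric space (alternatively, one runs the identical argument with nets). Using the inclusions above, upper semicontinuity of $\nu\mapsto\nu(\overline A)$ gives
\[\limsup_m \nu_m(A)\le\limsup_m \nu_m(\overline{A})\le\mu(\overline{A})=\mu(A),\]
while lower semicontinuity of $\nu\mapsto\nu(\Int A)$ gives
\[\liminf_m \nu_m(A)\ge\liminf_m \nu_m(\Int(A))\ge\mu(\Int(A))=\mu(A).\]
Combining the two, $\lim_m \nu_m(A)=\mu(A)$, which is precisely continuity of $\nu\mapsto\nu(A)$ at $\mu$.

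There is essentially no obstacle here: the statement is an immediate corollary of the previous Fact, and the only point that deserves a word of care is the reduction of continuity to convergence along sequences, which is licensed by metrizability of $\CM(X)$ in the weak-$*$ topology for $X$ compact metric. (This Fact is exactly what will later allow us, in the main construction, to control the map $\mu\mapsto\mu(A)$ at measures that assign no mass to the boundaries of the relevant sets.)
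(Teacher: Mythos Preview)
Your proof is correct and follows exactly the same sandwiching argument the paper sketches: from $\mu(\Int(A))\le\mu(A)\le\mu(\overline{A})$, equality of the three when $\mu(\partial A)=0$, and the semicontinuity of the outer two maps from the preceding Fact. The paper states this in a single line without writing out the $\limsup/\liminf$ step, but the content is identical.
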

Using the fact that the limit defining $h(\mu,\CA|\CB)$ is also the infimum, we easily arrive at the following:
%\begin{fct}The function $\mu \mapsto H_n(\mu,\CA)$ is continuous at every $\mu$ such that $\mu(\partial A)=0$ for every $A\in \CA$.
%\end{fct}
%\begin{fct}The function $\mu \mapsto h(\mu,\CA)$ is upper semicontinuous at every $\mu$ such that $\mu(\partial A)=0$ for every $A\in \CA$.
%\end{fct}
%\begin{fct}The function $\mu \mapsto H_n(\mu,\CA|\CB)$ is continuous at every $\mu$ such that $\mu(\partial A)=0$ for every $A\in \CA$ and $\mu(\pi^{-1}(\partial B))=0$ for every $B \in \CB$.
%\end{fct}
\begin{fct}For any finite partitions $\CA,\CB$ of $X$ the function $\mu \mapsto h(\mu,\CA|\CB)$ on $\CM_\Theta(X)$ is upper semicontinuous at every $\mu$ such that $\mu(\partial A)=0$ for every $A\in \CA$ and $\mu(\partial B)=0$ for every $B \in \CB$.
\end{fct}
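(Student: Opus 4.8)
The plan is to exploit the representation $h(\mu,\CA|\CB)=\inf_n H_n(\mu,\CA|\CB)$ and to show that each individual function $\mu\mapsto H_n(\mu,\CA|\CB)$ is in fact \emph{continuous} at the given $\mu$; since an infimum of functions that are upper semicontinuous at a point is again upper semicontinuous at that point, the claim follows.

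First I would fix $n$ and put $\CP=\bigvee_{g\in F_n}g^{-1}(\CA)$ and $\CQ=\bigvee_{g\in F_n}g^{-1}(\CB)$, so that $H_n(\mu,\CA|\CB)=\frac{1}{\abs{F_n}}\bigl(H(\mu,\CP\vee\CQ)-H(\mu,\CQ)\bigr)$. Each cell of $\CP$, of $\CQ$, or of $\CP\vee\CQ$ is a finite intersection of sets of the form $g^{-1}(A)$ with $A\in\CA\cup\CB$ and $g\in F_n$, and its topological boundary is contained in $\bigcup_{g\in F_n} g^{-1}(\partial A)$. Since $\mu$ is $\Theta$-invariant and $\mu(\partial A)=0$ for every $A\in\CA\cup\CB$, every such cell $C$ satisfies $\mu(\partial C)=0$. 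By the Fact asserting continuity of $\mu'\mapsto\mu'(C)$ at measures not charging $\partial C$, combined with continuity of $t\mapsto -t\ln t$ on $[0,1]$, the maps $\mu'\mapsto H(\mu',\CP\vee\CQ)$ and $\mu'\mapsto H(\mu',\CQ)$ are both continuous at $\mu$; hence so is their difference, i.e. $\mu'\mapsto H_n(\mu',\CA|\CB)$ is continuous at $\mu$.

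It then remains to pass from the sequence to the infimum. Given $\eps>0$, choose $n_0$ with $H_{n_0}(\mu,\CA|\CB)<h(\mu,\CA|\CB)+\eps/2$ (possible because the limit is the infimum); by the continuity just established there is a neighbourhood $U$ of $\mu$ in $\CM_\Theta(X)$ with $H_{n_0}(\mu',\CA|\CB)<H_{n_0}(\mu,\CA|\CB)+\eps/2$ for every $\mu'\in U$. Then for $\mu'\in U$ we get $h(\mu',\CA|\CB)\le H_{n_0}(\mu',\CA|\CB)<h(\mu,\CA|\CB)+\eps$, which is exactly upper semicontinuity at $\mu$.

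I do not anticipate a serious obstacle here. The one point that genuinely requires the boundary hypothesis is that conditional entropy is written as a \emph{difference} of two entropies, each of which is only upper semicontinuous in general; so one must use $\mu(\partial A)=\mu(\partial B)=0$ to upgrade both terms to continuity at $\mu$ before subtracting, as a mere upper semicontinuity bound on the two pieces would not combine in the right direction.
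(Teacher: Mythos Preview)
Your proof is correct and follows exactly the approach the paper sketches in one line: use that $h(\mu,\CA|\CB)=\inf_n H_n(\mu,\CA|\CB)$ and observe that each $H_n$ is continuous at $\mu$ under the boundary hypothesis. You have simply filled in the details (the boundary-of-a-join computation and the difference $H(\mu,\CP\vee\CQ)-H(\mu,\CQ)$) that the paper leaves implicit.
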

Finally:
\begin{fct}\label{fct:1.9}If a dynamical system $Y$ with action $\Theta$ of an amenable group $Y$ is an extension of $X$ and $\CA$ is a finite partition of $Y$, then the function $\mu \mapsto h(\mu,\CA|X)$ on $\CM_\Theta(Y)$ is upper semicontinuous at every $\mu$ such that $\mu(\partial A)=0$ for every $A\in \CA$.
\end{fct}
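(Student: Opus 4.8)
The plan is to deduce this from the previous Fact — upper semicontinuity of $\mu\mapsto h(\mu,\CA|\CB)$ at measures not charging the boundaries of the cells of $\CA$ and $\CB$ — applied on the space $Y$ to the partitions $\CA$ and a suitable pull-back $\pi^{-1}\CB$. The obstruction is that for a fixed partition $\CB$ of $X$ the set $\pi^{-1}(\partial B)$, which contains $\partial(\pi^{-1}B)$, may fail to be $\mu$-null no matter how nice $\CA$ is; one has no control over boundaries upstairs. The point is that this does not matter, because we fix in advance the measure $\mu_0$ at which semicontinuity is to be verified and then choose $\CB$ depending on $\mu_0$: we never pass to the infimum defining $h(\cdot,\CA|X)$ as an infimum of upper semicontinuous functions, we only use one carefully chosen member of the family.

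So fix $\mu_0\in\CM_\Theta(Y)$ with $\mu_0(\partial A)=0$ for every $A\in\CA$, and fix $\eps>0$. Since $h(\mu_0,\CA|X)=\inf_{\CB'}h(\mu_0,\CA|\pi^{-1}\CB')$, choose a finite partition $\CB_0=\{B^0_1,\dots,B^0_m\}$ of $X$ with $h(\mu_0,\CA|\pi^{-1}\CB_0)<h(\mu_0,\CA|X)+\eps/2$. Now invoke the classical fact that partitions with null boundaries are dense: for the measure $\pi\mu_0$ on $X$ there is a finite partition $\CB=\{B_1,\dots,B_m\}$ of $X$ with $(\pi\mu_0)(\partial B_i)=0$ for all $i$ and $\sum_i(\pi\mu_0)(B_i\sd B^0_i)$ arbitrarily small. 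Pulling back, $\mu_0(\pi^{-1}B_i\sd\pi^{-1}B^0_i)=(\pi\mu_0)(B_i\sd B^0_i)$, so by continuity of the static conditional entropy in the Rokhlin metric we may arrange $H(\mu_0,\pi^{-1}\CB_0|\pi^{-1}\CB)<\eps/2$. Then, using the chain rule for dynamical conditional entropy, monotonicity of $h(\mu,\cdot|\cdot)$ in both arguments, and the bound $h(\mu,\CP|\CQ)\le H(\mu,\CP|\CQ)$,
\begin{align*}
h(\mu_0,\CA|\pi^{-1}\CB) &\le h(\mu_0,\CA\vee\pi^{-1}\CB_0\,|\,\pi^{-1}\CB)\\
&= h(\mu_0,\pi^{-1}\CB_0|\pi^{-1}\CB)+h(\mu_0,\CA\,|\,\pi^{-1}\CB\vee\pi^{-1}\CB_0)\\
&\le H(\mu_0,\pi^{-1}\CB_0|\pi^{-1}\CB)+h(\mu_0,\CA|\pi^{-1}\CB_0)\\
&< h(\mu_0,\CA|X)+\eps.
\end{align*}

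It remains to apply the previous Fact. Since $\pi$ is continuous we have $\partial(\pi^{-1}B_i)\subseteq\pi^{-1}(\partial B_i)$, hence $\mu_0(\partial\pi^{-1}B_i)\le(\pi\mu_0)(\partial B_i)=0$ for each $i$; together with $\mu_0(\partial A)=0$ for $A\in\CA$ this shows that the hypotheses of the previous Fact are met, on the space $Y$, by the partitions $\CA$ and $\pi^{-1}\CB$ at the measure $\mu_0$. Therefore $\mu\mapsto h(\mu,\CA|\pi^{-1}\CB)$ is upper semicontinuous at $\mu_0$. Since $h(\mu,\CA|X)\le h(\mu,\CA|\pi^{-1}\CB)$ for every $\mu\in\CM_\Theta(Y)$ by definition of the infimum,
\[\limsup_{\mu\to\mu_0}h(\mu,\CA|X)\le\limsup_{\mu\to\mu_0}h(\mu,\CA|\pi^{-1}\CB)\le h(\mu_0,\CA|\pi^{-1}\CB)<h(\mu_0,\CA|X)+\eps.\]
As $\eps>0$ was arbitrary, $\limsup_{\mu\to\mu_0}h(\mu,\CA|X)\le h(\mu_0,\CA|X)$, which is the asserted upper semicontinuity.

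The only genuinely non-formal ingredients are the density of null-boundary partitions in a compact metric space and the continuity of the static conditional entropy $H(\lambda,\cdot|\cdot)$; both are classical and unaffected by the group being merely amenable, and the dynamical-entropy inequalities used above (chain rule, monotonicity in each argument, $h\le H$) hold verbatim for actions of amenable groups. The main point to get right is the order of quantifiers — fixing $\mu_0$ before choosing $\CB$ — which is exactly what circumvents the lack of boundary control for preimages.
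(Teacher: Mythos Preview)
Your proof is correct and follows the same overall strategy as the paper: fix $\mu_0$, produce a partition $\CB$ of $X$ whose pulled-back cells have $\mu_0$-null boundaries, apply the previous Fact to $h(\cdot,\CA|\pi^{-1}\CB)$, and use the domination $h(\cdot,\CA|X)\le h(\cdot,\CA|\pi^{-1}\CB)$.

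The only difference is a shortcut the paper takes that you do not. You first pick an arbitrary $\CB_0$ realizing the infimum up to $\eps/2$ and then approximate it in the Rokhlin metric by a null-boundary $\CB$, invoking the chain rule and the bound $h\le H$ to transfer the estimate. The paper instead observes that $h(\mu_0,\CA|X)$ equals the infimum of $h(\mu_0,\CA|\pi^{-1}\CB_n)$ along \emph{any} sequence $(\CB_n)$ whose maximal cell diameters tend to~$0$; since one can build such partitions of $X$ with $\pi\mu_0$-null boundaries, the $\CB_n$ themselves already realize the infimum and the Rokhlin-metric approximation and chain-rule step become unnecessary. Your route is slightly longer but perfectly sound; the paper's route trades the density-of-null-boundary-partitions step for the (equally standard) small-diameter-partitions-compute-the-infimum step, and thereby avoids the entropy manipulations.
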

To observe that, note that $h(\mu,\CA|X)$ is the infimum of $h(\mu,\CA|\CB_n)$, provided that the diameter of the largest set in $\CB_n$ tends to $0$. Since we can construct partitions into sets of arbitrarily small diameter that all have boundaries whose measure $\mu$ is $0$, Fact \ref{fct:1.9} now follows.

\section{Principal and faithful extensions of amenable group actions}\label{sec:faithful_group}

The goal of this section is to prove the following theorem which is a version of theorem 3.1 of \cite{DH2} for the case of free actions of amenable groups:
\begin{thm}\label{thm:faithful_group}
    Let $(X,\Theta^{(X)})$ be a topological dynamical system, where $\Theta^{(X)}$ is a free action of an amenable group $G$ by homeomorphisms on a compact metric space $X$. There exists a compact, zero-dimensional metric space $Y$ with free action $\Theta^{(Y)}$ of $G$ such that the system $(Y,\Theta^{(Y)})$ is a principal and faithful extension of $(X,\Theta^{(X)})$.
\end{thm}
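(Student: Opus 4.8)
The plan is to construct $Y$ as an array system sitting over $X$, built as an inverse limit of a sequence of increasingly fine partitions of $X$ whose boundaries are null for every invariant measure, arranged so that the extra information encoded in the array is ``spread out so thinly'' (along a tower-like structure coming from the \Fo sequence) that it carries zero conditional entropy, and so that the coding is injective on $X$, yielding faithfulness. Concretely, first I would fix a refining sequence of finite open covers $\CU_k$ of $X$ with $\diam \to 0$ and, using Fact~1.8 (a measure-null boundary condition can always be arranged by perturbing the cover), pass to partitions $\CP_k$ whose cell boundaries are null for \emph{every} $\mu\in\CM_\Theta(X)$ — here one uses that $\CM_\Theta(X)$ is metrizable and compact, so a countable dense set of measures suffices to control, and a Baire-category / small-perturbation argument gives a single good partition. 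The $k$-th layer $\Lambda_k$ of the array will essentially record, at each $g\in G$, which cell of $\CP_k$ the point $g x$ lies in — but \emph{censored}: on a large but sparse subset of coordinates the symbol is replaced by a ``blank,'' and the positions of the blanks are themselves determined by $x$ in a way that lets one still reconstruct $x$.

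The mechanism for keeping conditional entropy at zero is the key device borrowed from \cite{DH2}, now in the amenable setting. Using freeness of the action together with the Ornstein--Weiss / quasi-tiling machinery (or, more elementarily, the lower-Banach-density lemma proved above applied to suitably chosen ``marker'' sets), I would select, for each $k$, a clopen ``marker set'' $M_k\subset Y$ that is syndetic with controlled gaps: the return-time structure to $M_k$ gives a partition of $G$ into translates of large \Fo blocks $F_{n_k}$ (up to a small remainder). On the complement of the marker set the $k$-th layer is forced to be blank, so the only genuine new randomness in layer $k$ lives on the marker set, whose lower Banach density can be made as small as we like; by the Lemma relating $D(H)$ to $\lim_n D_{F_n}(H)$ and the subadditivity of $H_n(\nu,\CR_k\mid X)$, this forces $h(\nu,\CR_k\mid X)$ to be bounded by (density of informative coordinates)$\times\ln|\Lambda_k|$, hence $\to 0$; combined with the Fact that $h(\nu\mid X)=\lim_k h(\nu,\CR_k\mid X)$ we get $h(\nu\mid X)=0$ for every invariant $\nu$, i.e. principality. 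Faithfulness requires that the marker sets $M_k$ be \emph{definable from $x$ itself} (so that a preimage $\nu$ of $\mu$ has no freedom in choosing them): this is where one arranges the markers via a point-dependent rule — e.g. selecting marker coordinates by a rule depending on the $\CP_k$-names along a reference \Fo set — so that off the markers the name is blank and on the markers it equals the $\CP_k$-cell, and both the marker locations and the symbols are functions of $x$. Then $\pi$ is injective on the set of ``generic'' points, and one checks the single-preimage property for measures by a standard disintegration argument: any $\nu$ with $\pi\nu=\mu$ must concentrate on this graph.

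The steps, in order: (1) build the boundary-null refining partitions $\CP_k$ uniformly over $\CM_\Theta(X)$; (2) for each $k$, using freeness and the \Fo sequence, fix a clopen marker scheme with lower Banach density $\le \eps_k\to 0$ and with marker locations a Borel function of $x$; (3) define the array: layer $k$ is the $\CP_k$-name censored to the marker coordinates, and verify $Y:=\overline{\{(\text{array of }x):x\in X\}}$ is a closed $\Psi$-invariant subset of $\prod_k\Lambda_k^G$, with the obvious factor map $\pi$ onto $X$ continuous and surjective; (4) show $\pi$ is a.e.\ injective and deduce faithfulness via disintegration; (5) estimate $h(\nu,\CR_k\mid X)\le C\eps_k$ using subadditivity along \Fo sets and the density bound, conclude $h(\nu\mid X)=0$; (6) check that $\Theta^{(Y)}$ is free — immediate because $\pi$ intertwines the actions and $\Theta^{(X)}$ is free; (7) check $Y$ is zero-dimensional and metrizable — immediate as a subsystem of a countable product of finite discrete spaces. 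The main obstacle, and the part that in \cite{DH2} already caused ``far fewer technical complications,'' is step (2)--(3): making the marker scheme simultaneously (a) clopen / finitely determined so that $Y$ is genuinely zero-dimensional, (b) of vanishing density so that entropy is killed, (c) a function of $x$ so that faithfulness holds, and (d) compatible across different $k$ (nested or at least non-interfering marker sets) so that the reconstruction of $x$ from the whole array actually works. Freeness is exactly what is needed here: it guarantees that the orbit of a point is a faithful copy of $G$, so that marker rules phrased combinatorially on $G$ transfer unambiguously to $X$, and that the \Fo-block tiling of orbits has no collapsing. Carrying this out requires the amenable quasi-tiling apparatus (Ornstein--Weiss) in place of the interval structure of $\mathbb{Z}$, which is the source of the extra technical weight relative to \cite{DH1,DH2}.
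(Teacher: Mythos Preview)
Your plan has a genuine gap at step (1), and it is fatal. You claim that one can find partitions $\CP_k$ of $X$ whose cell boundaries are null for \emph{every} invariant measure, arguing that a countable dense set of measures suffices. This is false: the map $\mu\mapsto\mu(\partial A)$ is only upper semicontinuous, so vanishing on a dense set says nothing about the remaining measures. What you are asking for is exactly the small boundary property, and there are free amenable actions (already free $\mathbb{Z}$-actions with positive mean dimension) that do not have it. Since the theorem makes no finiteness assumption on entropy, you cannot assume SBP. Note too that if step (1) \emph{did} work, the rest of your construction would be superfluous: with universally null boundaries the naive array-of-names extension is already measure-theoretically invertible over every $\mu$, hence principal and faithful, and your marker/censoring device does nothing --- there is no ``randomness'' to thin out when every layer is a Borel function of $x$; the only possible conditional entropy comes from boundary ambiguity in the closure, which censoring does not touch.

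The paper avoids this obstruction in a completely different way. Instead of partitioning $X$, it partitions $X\times I$ by graphs of continuous functions $f:X\to I$; such graphs have $\mu\times\lambda$-measure zero automatically --- but only for \emph{product} measures. The entire inductive machinery (the conjugacies $\Phi_k$, the quasitilings, the block recoding) is then devoted to forcing every invariant measure on the limit system $Y$ to project to something arbitrarily close to a product $\mu\times\lambda$ on $X\times I$. Faithfulness is obtained not by making the fiber a function of $x$, but via the disjointness Lemma~\ref{lem:disjoint_group}: for each $\mu$ one picks an ergodic action on $I$ disjoint from $(X,\mu)$, so the only joining over $\mu$ is the product, and hence $\mu\times\lambda$ has a unique lift. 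This disjoint-action idea is the crux you are missing; the marker/tiling apparatus in the paper serves an entirely different purpose (controlling block frequencies so that the recoded measures land in the prescribed neighbourhoods $\CU_k$) from the density-based entropy bound you propose.
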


For groups that are not free, we can obtain a slightly weaker result (as an easy corollary of the above), where the extension is principal, but not necessarily faithful:

\begin{thm}\label{thm:principal_group}
    Let $(X,\Theta^{(X)})$ be a topological dynamical system, where $\Theta^{(X)}$ is an action of an amenable group $G$ by homeomorphisms on a compact metric space $X$. There exists a compact, zero-dimensional metric space $Y$ with free action $\Theta^{(Y)}$ of $G$ such that the system $(Y,\Theta^{(Y)})$ is a principal extension of $(X,\Theta^{(X)})$.
\end{thm}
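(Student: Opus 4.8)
The plan is to derive Theorem~\ref{thm:principal_group} from Theorem~\ref{thm:faithful_group} by a standard trick: replace a possibly non-free action by a free action on a product, apply the already-proven theorem there, and then project back. Concretely, given $(X,\Theta^{(X)})$ with an arbitrary (not necessarily free) action of $G$, I would form an auxiliary system $(W,\Phi)$ on which $G$ acts freely, take the product $(X\times W,\Theta^{(X)}\times\Phi)$, observe that the product action is free (since it is free in the second coordinate), apply Theorem~\ref{thm:faithful_group} to obtain a zero-dimensional principal and faithful extension $Y$ of $X\times W$, and finally note that $Y$ is then a zero-dimensional extension of $X$ via the composition $Y\to X\times W\to X$ of factor maps. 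The last issue to check is that this composite extension is still \emph{principal}: since $X\times W\to X$ is a factor map, principality of $Y\to X\times W$ plus principality (in fact zero fiber entropy) of $X\times W\to X$ must be combined. This is where the choice of $W$ matters.

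For $W$ I would take any zero-dimensional free $G$-system of topological entropy zero --- for instance, it suffices to exhibit a single free $G$-subshift with zero entropy. Such systems exist for every countable amenable group: one can take a suitable ``tiling'' or ``Rokhlin'' subshift, or more cheaply, embed $G$ into a subshift via a sparse, slowly-growing marker construction so that the number of admissible patterns grows subexponentially in the F\o lner sets; freeness is arranged by ensuring the orbit of every point is infinite and aperiodic. (If one does not want to prove zero entropy of $W$ from scratch, it is enough that $h(X\times W\to X)=0$, i.e. that $W$ contributes no conditional entropy over a point, which follows from $h_{top}(W)=0$.) Then for any invariant measure $\nu$ on $Y$ with projections $\mu_{X\times W}$ on $X\times W$ and $\mu_X$ on $X$, we have $h(\nu\mid X\times W)=0$ by Theorem~\ref{thm:faithful_group}, and $h(\mu_{X\times W}\mid X)\le h_{top}(W)=0$; adding these conditional entropies along the tower $Y\to X\times W\to X$ (using the Abramov--Rokhlin-type addition formula for conditional entropy of amenable actions, valid here since all fiber entropies in question are finite --- in fact zero) gives $h(\nu\mid X)=0$. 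Hence $Y\to X$ is principal.

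The remaining points are routine: $Y$ is compact, metric and zero-dimensional because it is produced by Theorem~\ref{thm:faithful_group}; the action $\Theta^{(Y)}$ is free for the same reason; and the map $Y\to X$ is a continuous, $G$-equivariant surjection as a composition of two such maps, so $(Y,\Theta^{(Y)})$ is genuinely an extension of $(X,\Theta^{(X)})$. Faithfulness is, of course, lost in general --- a point $x\in X$ now has as many preimages in $Y$ as the fiber over $x$ in $X\times W$, which is a whole copy of $W$ --- but this is exactly why the conclusion here is weaker than Theorem~\ref{thm:faithful_group}.

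The main obstacle, and essentially the only nontrivial ingredient, is supplying the auxiliary system $W$: one needs a zero-entropy, zero-dimensional, \emph{free} action of the given amenable group $G$. For $G=\mathbb{Z}$ this is elementary (e.g. an odometer is free and has entropy zero), but for a general countable amenable group one must invoke the existence of free zero-entropy subshifts (equivalently, free minimal zero-entropy actions), which is known but not completely trivial; I would cite the relevant construction rather than reproduce it. Once $W$ is in hand, the proof is just the two-step tower argument above together with the additivity of conditional entropy over factor maps.
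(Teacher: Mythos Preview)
Your proposal is correct and is essentially the same as the paper's proof: the paper also forms the product of $X$ with a zero-dimensional, zero-entropy, free $G$-system (citing \cite{DHZ} for its existence), applies Theorem~\ref{thm:faithful_group} to the product, and observes that the composite extension is principal while faithfulness may be lost. The only point you leave slightly open --- the existence of the auxiliary free zero-entropy system $W$ --- is exactly what the paper handles by citation rather than construction.
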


Before we can prove theorem \ref{thm:faithful_group}, we need to introduce several tools that allow us to perform and analyze block operations in array systems.

\subsection{Quasitilings of amenable groups}

\begin{defn} A \emph{quasitiling} $\CT$ of a countable amenable group $G$ is determined by two objects:
    \begin{enumerate}
        \item a finite collection $\CS(\CT)$ of finite subsets of $G$ containing the unit $e$,
        called \emph{the shapes}.
        \item a finite collection $\CC(\CT) = \{\CC(S):S\in\CS(\CT)\}$ of disjoint subsets of $G$, called \emph{center sets} (for the shapes).
    \end{enumerate}
    The quasitiling is then the family $\CT=\{Sc:S\in\CS(\CT),c\in \CC(S)\}$. We require that every $T\in \CT$ have a unique representation $T=Sc$ for some $S\in \CS(\CT), c\in \CC(S)$.\footnote{This requirement is stronger than asking that different tiles have different centers. Two tiles $Sc$ and $S'c'$ may be equal even though $c\neq c'$
        (this is even possible when $S=S'$). However, when the tiles are disjoint, then the (stronger) requirement
        follows automatically from the fact that the centers belong to the tiles.} Hence, by the \emph{tiles} of $\CT$ (denoted by the letter $T$) we will mean either the sets $Sc$ or the pairs $(S,c)$ (i.e., the tiles with defined centers), depending on the context. 
\end{defn}
Note that every quasitiling $\CT$ can be represented in a symbolic form, as a point $x_\CT\in\Lambda^G$, with the alphabet $\Lambda = \CS(\CT)\cup\{\emptyset\}$, as follows: $x_\CT(g)=S$ if $g\in \CC(S)$ (the uniqueness condition on tiles implies that every $g$ will belong to at most one $\CC(S)$), and $x_\CT(g)=\emptyset$ otherwise.

\begin{defn}\label{qt} Let $\eps\in[0,1)$ and $\alpha\in(0,1]$. A quasitiling $\CT$ is called
    \begin{enumerate}
        \item \emph{$\eps$-disjoint} if there exists a mapping $T\mapsto T^\circ$ ($T\in\CT$) such that
        \begin{itemize}
            \item $T^\circ$ is a $(1-\eps)$-subset of $T$, and
            \item $T\neq T'\implies T^\circ\cap {T'}^\circ=\emptyset$;
        \end{itemize}
        \item \emph{disjoint} if the tiles of $\CT$ are pairwise disjoint;
        \item \emph{$\alpha$-covering} if $D(\bigcup\CT)\ge\alpha)$ (by $\bigcup\CT$ we mean the union of all tiles of $\CT$).
    \end{enumerate}
\end{defn}

\begin{defn}
    If $X$ is a zero-dimensional compact metric space and $G$ is an amenable 
    group acting on $X$ by homeomorphisms, then a \emph{dynamical quastiling} $\CT$ (or $\CT(\cdot)$)
    is a map $x\mapsto \CT(x)$ which assigns to every $x\in X$ a quasitiling of 
    $G$ such that the set of all shapes $\CS(\CT) =\bigcup_{x\in X}\CS(\CT(x))$ is 
    finite, and $x\mapsto \CT(x)$ is a factor map from $X$ onto a symbolic 
    dynamical system over the alphabet $\Lambda=\CS\cup\set{\emptyset}$. We say 
    that a dynamical quasitiling is $\eps$-disjoint, disjoint or 
    $\alpha$-covering, if $\CT(x)$ has the respective property for every $x$.
\end{defn}

In this paper we will use the following theorems (\cite{DH3}):

\begin{thm}\label{disjoint_tilings}
    Let $X$ be a compact, zero-dimensional metric space and let $G$ be a 
    countable amenable group acting freely on $X$ by homeomorphisms, with a F\o 
    lner sequence $(F_n)$ of symmetric sets containing the unit. Given $\eps>0$ 
    and any positive integer $n_0$, there exists a dynamical quasitiling 
    $x\mapsto \CT(x)$ which is \emph{disjoint}, and $(1-\eps)$-covering, and 
    such that every shape $S$ of every $\CT(x)$ is a $(1-\eps)$-subset of some 
    F\o lner set $F_{n(S)}$ where $n(S)>n_0$.
\end{thm}

\begin{thm}\label{congruent_tilings}
    Let $G$ be an amenable group acting freely on a zero-dimensional metric space $X$ and let $x\mapsto \CT(x)$ be any disjoint dynamical quasitiling of $G$. For any $\eps>0$, any finite $K\subset G$ and any $\delta>0$ there exists a disjoint, 
    $(1-\eps)$-covering dynamical quasitiling $x\mapsto \CT'(x)$ such that every shape of $\CT'(x)$ is $(K,\delta)$-invariant, and every tile of $\CT(x)$ is either a subset of some tile of $\CT'(x)$ or is disjoint from all such tiles.
\end{thm}

%\begin{lem}\label{lem:large_core}
%    Let $E=\bigcup_{i=1}^kE_iC_i$, where $E_1\subset E_2\subset\cdots\subset E_k\subset G$ are finite sets and $C_i\subset G$ are pairwise disjoint. Let $E'_i\subset E_i$, with $\abs{E'_i}>(1-\gamma)\abs{E_i}$, and let $E'=\bigcup_{i=1}^kE'_iC_i$. Then $D(E)-D(E')<\gamma$.
%\end{lem}
%\begin{proof}
%    Let $\eps>0$ and let $\delta$ be the constant from lemma \ref{lem:small_boundary}. Let $F_n$ be a \Fo set that is $(E_kE_k^{-1},\delta)$-invariant and that ,,realizes up to $\eps$'' the lower Banach density of $E$, i.e. for every $g\in G$, $\frac{\abs{E\cap F_ng}}{\abs{F_n}}>D(E)-\eps$. By discarding the of $E$ that lie on the boundary of $F_ng$ we obtain a set $E_g$ that satisfies the inequality $\frac{\abs{E_g\cap F_ng}}{\abs{F_n}}>D(E)-2\eps$. It is now clear that $\frac{\abs{E'\cap F_ng}}{\abs{F_n}}>(1-\gamma)(D(E)-2\eps)$ (it suffices to consider the sets $E'_ic$ contained in $E_g$). Since $\eps$ is arbitrarily small, we obtain $D(E')\ge(1-\gamma)D(E)>D(E)-\gamma$.
%\end{proof}
%%$\abs{F_n\cap\bigcup_{i=1}^nE'_iC_i}>(1-\eps_1)(1-\eps_2)(1-\eta)\abs{F_n}$. Ponieważ taką samą argumentację można przeprowadzić dla dowolnego przesunięcia zbioru $F_n$ oraz dla dowolnego $\eta$, wnioskujemy że dolna gęstość Banacha zbioru $\bigcup_{i=1}^nE'_iC_i$ przekracza $(1-\eps_1)(1-\eps_2)$.
\begin{lem}\label{lem:disjoint_quasitiling}
    Let $Y$ be a zero-dimensional dynamical system with free action of an amenable group $G$. For any $\eps>0$ and a finite set $A\subset G$ there exists a disjoint, dynamical quasitiling $\CT$, such that replacing all tiles of $\CT$ by their $A$-cores yields a $(1-\eps)$-covering quasitiling. Furthermore, the tiles of $\CT$ can be assumed to be $(F,\gamma)$-invariant for any previously fixed $F$ and $\gamma$.
\end{lem}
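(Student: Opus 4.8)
The plan is to obtain the quasitiling from Lemma~\ref{lem:quasitiling} and then \emph{trim} the tiles so that they become genuinely disjoint, at the price of shrinking each \Fo shape $F_{n_i}$ to a slightly smaller but still highly invariant set and of discarding a collection of tiles of small lower Banach density. Fix a small auxiliary $\eps'$, to be pinned down at the end in terms of $\eps$, $\abs A$, $F$ and $\gamma$, and apply Lemma~\ref{lem:quasitiling} to get $k$, indices $n_1<\dots<n_k$ and an $\eps'$-disjoint, $(1-\eps')$-covering, continuous, dynamical quasitiling $(\set{F_{n_1},\dots,F_{n_k}},(C_1,\dots,C_k))$. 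Since the proof of that lemma lets the $n_i$ be taken as large as we wish (beyond $n_1=n_0+1$), I would also require every $F_{n_i}$ to be $(F,\gamma/4)$-invariant and $(A,\delta_1)$-invariant for a small $\delta_1$ fixed below. As $Y$ is zero-dimensional we may treat it as an array system, so the quasitiling is codeable; and, deleting from $C_i(y)$ each center that also lies in some $C_{i'}(y)$ with $i'>i$ -- which changes the union $\bigcup_i F_{n_i}C_i(y)$ not at all, as $F_{n_i}\subseteq F_{n_{i'}}$ -- we may assume the sets $C_1(y),\dots,C_k(y)$ are pairwise disjoint for every $y$. Write $W=F_{n_k}^{-1}F_{n_k}$ and fix a horizon $(F_N,J)$ for $W$.

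For a tile $T=F_{n_i}c$ of this quasitiling at $y$ put $\widehat T = T\setminus\bigcup\set{T':T'\text{ a tile at }y,\ T'\ne T}$; distinct trimmed tiles are automatically disjoint, since a point of $\widehat T$ belongs to no other tile. Only tiles $F_{n_{i'}}c'$ with $c'\in Wc$ can meet $T$, so $\widehat T$ is a function of $i$ and of the finite pattern $(C_{i'}(y)\cap Wc)_{i'=1}^k$; translating so that $c$ sits at the identity, these patterns fall into finitely many \emph{types}, which produces finitely many shapes $E_1,\dots,E_q$ -- each of the form $F_{n_i}\setminus R$ with $R$ a union of translates of the $F_{n_{i'}}$ -- and center maps $C_j(\cdot)$ (the centers of type $j$) that are clopen-definable via the horizon and equivariant, so $\bigl((E_1,\dots,E_q),(C_1,\dots,C_q)\bigr)$ is a continuous, dynamical and disjoint quasitiling. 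Its union $\bigcup_jE_jC_j(y)=\bigcup_T\widehat T$ is precisely the set of group elements lying in exactly one tile of the original quasitiling, and using $\eps'$-disjointness together with Lemma~\ref{lem:small_boundary} -- to bound, inside a sufficiently invariant window, the ``defect'' $\bigcup_\alpha(T_\alpha\setminus S_\alpha)$, where $S_\alpha\subseteq T_\alpha$ are the disjointifying subsets and which contains every multiply-covered element -- one gets that the multiply-covered set and the uncovered set both have lower Banach density $O(\eps')$; hence $D\bigl(\bigcup_jE_jC_j(y)\bigr)\ge 1-O(\eps')$.

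The delicate part, which I expect to be the main obstacle, is to make the shapes $(F,\gamma)$-invariant; once that is done, passing to $A$-cores is routine. A shape $F_{n_i}\setminus R$ with $\abs R\le\gamma'\abs{F_{n_i}}$ is $(F,\gamma)$-invariant provided $\gamma'$ is small enough in terms of $F$ and $\gamma$, because $F_{n_i}$ is already $(F,\gamma/4)$-invariant and removing a $\gamma'$-fraction perturbs the relative $F$-boundary only by $O(\abs F\,\gamma')$. I would therefore \emph{discard} every \emph{badly trimmed} tile $T=F_{n_i}c$, i.e.\ one with $\abs{T\setminus\widehat T}>\gamma'\abs T$; this is a condition on the type, so the discarding remains codeable and dynamical. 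The key estimate is that a badly trimmed tile has at least a $(\gamma'-\eps')$-fraction of its mass inside the defect $\bigcup_\alpha(T_\alpha\setminus S_\alpha)$, and since the $S_T$ are pairwise disjoint, the union of all badly trimmed tiles has lower Banach density $O(\eps'/\gamma')$. Thus the retained shapes are $(F,\gamma)$-invariant, still pairwise disjoint (distinct types give disjoint sets of centers, and the $C_i(y)$ were made disjoint), and their union still covers density $\ge 1-O(\eps')-O(\eps'/\gamma')$. Finally, choosing $\delta_1$ so small that Lemma~\ref{lem:invariance} forces each retained $E_j$ -- which, $\abs R$ being small, is $(A,\delta_1)$-invariant -- to satisfy $\abs{(E_j)_A}>(1-\tfrac\eps2)\abs{E_j}$, the (mild variant of the) argument of Lemma~\ref{lem:large_core} applicable here, since all the $E_j$ lie in $F_{n_k}$, yields $D\bigl(\bigcup_j(E_j)_A C_j(y)\bigr)\ge D\bigl(\bigcup_jE_jC_j(y)\bigr)-\tfrac\eps2$. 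Picking first $\gamma'$ (from $F$ and $\gamma$) and $\delta_1$ (from $\eps$ and $A$), and then $\eps'$ small relative to $\eps$, $\gamma'$ and $\abs A$, makes the last quantity at least $1-\eps$. Apart from the trimming bookkeeping, the whole argument is a direct application of Lemmas~\ref{lem:invariance}--\ref{lem:large_core}.
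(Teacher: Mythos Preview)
Your argument is correct and reaches the conclusion, but the disjointification step is organized differently from the paper's. Both proofs start from the $\eps'$-disjoint, $(1-\eps')$-covering quasitiling of Lemma~\ref{lem:quasitiling}. The paper then fixes an enumeration $g_1=e,g_2,\ldots,g_N$ of $F_{n_k}$, which induces a numbering on every tile, and assigns each group element to the tile in which it carries the smallest number; the resulting shapes $E_{y,c}\subset F_{n_i}$ are declared (from $\tfrac\delta2$-disjointness) to be $(1-\tfrac\delta2)$-subsets of the corresponding $F_{n_i}$, so every tile survives with nearly full mass and no discarding is needed --- the passage to $(F,\gamma)$-invariance and to $A$-cores is then immediate from Lemmas~\ref{lem:invariance} and~\ref{lem:large_core}. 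Your route instead deletes \emph{all} multiply-covered elements, which may shrink some tiles drastically, and then throws away the badly trimmed ones; the cost is the additional density estimate $O(\eps'/\gamma')$ for the discarded set, which you obtain by charging each bad tile's loss against the ``defect'' $\bigcup_\alpha(T_\alpha\setminus S_\alpha)$ through the disjoint cores $S_\alpha$. What the paper's priority scheme buys is that the trimmed tiles exactly partition the original union and are all large, in one stroke; what your scheme buys is that the size control for the retained tiles is explicit and does not rest on the (tersely stated) assertion that the priority rule loses at most a $\tfrac\delta2$-fraction from every tile. The final appeal to Lemmas~\ref{lem:invariance}--\ref{lem:large_core} is the same in both proofs; your remark that Lemma~\ref{lem:large_core} must be used in the mild variant where the shapes are merely contained in $F_{n_k}$ rather than nested is accurate and applies equally to the paper's proof.
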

Intuitively this lemma means that the quasitiling $\CT$ is good enough to yield a well-covering quasitiling even after replacing the tiles with their $A$-cores.
\begin{proof}
    There exists a $\gamma'$ such that if $T$ is $(F,\gamma')$-invariant and $T'$ is a $(1-\gamma')$-subset of $T$, then $T'$ is $(F,\gamma)$-invariant. In addition there exists a $\delta$ such that if $T$ is $(A,\delta)-$invariant then $T_A$ is a $(1-\gamma')$-subset of $T$. Decreasing $\gamma'$ if necessary, we can assume that $(1-\gamma')^2>1-\eps$. Finally, there exists a $\gamma''<\gamma'$ and $n_0$ such that any $(1-\gamma'')$-subset of any F\o lner set $F_n$ for $n\geq n_0$ is both $(F,\gamma')$ and $(A,\delta)$-invariant. 
    
    Apply theorem \ref{disjoint_tilings} for $\gamma''$ and $n_0$. This yields a disjoint, dynamical, $(1-\gamma'')$-covering quasitiling $\CT'$ whose tiles are all $(1-\gamma'')$ - subsets of F\o lner sets $F_n$ for $n\geq n_0$. This means that every tile $T$ of $\CT$ is $(A,\delta)$-invariant, and thus $T_A$ is a $(1-\frac{\gamma'}{2})$-subset of $T$. This in turn means that it's a $(1-\gamma')$-subset of $F_n$ for some $n\geq n_0$, and this gives us the $(F,\delta)$-invariance. In addition, since $\CT$ was $(1-\gamma')$-covering, then replacing all tiles by their $(1-\gamma')$-subsets will yield a quasitiling which is $(1-\gamma')^2$-covering (see e.g. lemma 3.4 in \cite{DHZ}), and thus $(1-\eps)$-covering.
    %
    %Zastosujmy ponownie lemat \ref{lem:invariance}, tym razem do zbiorów $E_iE_i^{-1}$ oraz liczby $\eps'$, otrzymując stałą $\zeta$. Niech $n$ będzie na tyle duże, że zbiór $F_n$ jest $(E_iE_i^{-1},\zeta)$ niezmienniczy dla wszystkich $i$. Wówczas dla większości elementów $f\in F_n$ (stanowiącej co najmniej $(1-\eps')$ wszystkich elementów $F_n$) zbiór $E_iE_i^{-1}f$ jest w całości zawarty $F_n$. Jeśli taki element $f$ należy również do obszaru naszego rozmieszczenia (czyli $f\in E_ic$ dla pewnego $i$ oraz $c\in C_i(y)$), oznacza to, że $E_ic \subset F$. Ponieważ do obszarów należy co najmniej $(1-\eps')\abs{F_n}$ elementów zbioru $F_n$, widzimy że co najmniej $(1-\eps')^2\abs{F_n}$ elementów zbioru $F_n$ należy do obszarów całkowicie zawartych w $F_n$. Ponieważ dla dowolnego $E_i$ zbiór punktów $f\in E_i$, takich że $F_{n_0}f\subset E_i$, ma liczność co najmniej $(1-\eps')\abs{E_i}$, wnioskujemy, że zbiór punktów $f\in F_n$, takich że $F_{n_0}f\subset E_ic$ dla pewnego $c\in C_i(y)$, ma liczność co najmniej  $(1-\eps')^3\abs{F_n}>(1-\eps)\abs{F_n}$. Ponieważ taką samą argumentację można przeprowadzić dla dowolnego przesunięcia zbioru $F_n$, żądane oszacowanie dolnej gęstości Banacha jest prawdziwe.
\end{proof}

\begin{lem}\label{lem:frequency}
    
    Let $Y$ be an array system and let $y\in Y$. Let $\eps>0$. For any finite $A\subset G$ there exists a $\delta>0$ with the following property: If $\CT$ is a $(1-\frac{\eps}{3})$-covering quasitiling by $(A,\delta)$-invariant tiles and $F$ is such that the union of tiles of $\CT$ contained in $F$ is a $(1-\frac{2\eps}{3})$-subset of $F$, then for every block $Q$ with domain $A$ the frequency of $Q$ in $Y(F)$ differs by at most $\eps$ from the average frequency of $Q$ in the blocks determined by the tiles of $\CT$ contained in $F$.
    
    %
    %
    %
    %
    %
    % Let $E$ be $(A,\delta)$-invariant and $(E,C(\cdot))$ -- a disjoint,  $(1\!-\!\frac{\eps}{3})$-covering quasitiling. Let $Q$ be a block with domain $A$. If $F$ is a set such that for every $y$ at least $(1-\frac{2\eps}{3})$ elements of $F$ belong to tiles of $(E,C(y))$ that are contained in $F$, then the frequency of $Q$ in $y(F)$ differs by at most $\eps$ from the average frequency of $Q$ in the blocks of $y$ determined by these tiles. 
    %%Jeśli $F_n$ jest dostatecznie dalekim zbiorem \Fo, to częstość występowania $Q$ w zbiorze $y(F_n)$ różni się o najwyżej $\eps$ od uśrednionej częstości występowania $Q$ w blokach $y$ wyznaczonych przez te obszary rozmieszczenia $(E,C(y))$, które są podzbiorami $F_n$. 
\end{lem}
\begin{proof}
    Choose $\delta$ from lemma \ref{lem:large_core} for $A$ and $\frac{\eps}{3}$. %Niech $F_n$ będzie
    %na tyle dużym zbiorem \Fo, że ,,realizuje'' z dokładnością do $\frac\eps3$ dolną gęstość Banacha rozmieszczenia $(E,C(y))$ nawet jeśli uwzględnimy tylko obszary całkowicie w nim zawarte (korzystamy z lematu \ref{lem:small_boundary}). 
    The frequency of $Q$ in $y(F)$ is a weighted average of its average frequency in the tiles of $\CT$ that are subsets of $F$ and its average frequency in their complement and on their boundaries. The latter part is not controlled (it is some number from the interval $[0,1]$), but its coefficient in the weighted average is at most $\frac{2\eps}{3}+\frac{\eps}{3}=\eps$, which concludes the proof.
    %Jednocześnie niech Niech $\eta=\frac{\delta}{3}$ i dobierzmy $\xi$ z lematu \ref{lem:small_boundary} dla stałej $\frac{\delta}{3}$ i zbioru $E$. Jeśli zbiór $F$ jest $(EE^{-1},\xi)$-niezmienniczy i w proporcji co najmniej $(1-\eta)$ pokryty obszarami postaci $Ec, c\in C(y)$, to z lematu \ref{lem:small_boundary} wynika, że łączna liczność obszarów postaci $Ec,c\in C(y)$, takich że $Ec\subset F$ wynosi co najmniej  $(1-\frac{\delta}{3})^2\abs{F}$. Oznaczmy punkty zaczepienia tych obszarów przez $c_1,\ldots,c_N$. Z lematu \ref{lem:invariance} wynika, że istnieje $E'\subset E$, takie że $\abs{E'}>(1-\frac{\delta}{3})\abs{E}$ i dla każdego $h\in E'$ mamy $E_0h\subset E$. Wobec tego jednak dla każdego bloku $Q$ wyrażenia $\frac{1}{\abs{F}}\sum_{g\in F}1_{y(E_0g)=Q}$ i $\frac{1}{N}\sum_{i=1}^N\left(\frac{1}{\abs{E}}\sum_{h\in E}1_{y(E_0hc_i)=Q}\right)$ różnią się o najwyżej $\delta$, ponieważ jeśli $g=hc_i$ dla $h\in E'$, to zbiór $E_0hc_i$ jest zawarty zarówno w $F$, jak i w $Ec_i$, a więc odpowiadający mu składnik w obu średnich jest równy. Takie punkty $g$ stanowią jednak co najmniej $(1-\frac{\delta}{3})^3>(1-\delta)$ liczności zbioru $F$, co kończy dowód.
\end{proof}
Note that the set $F$ in the above lemma can be, for instance, a sufficiently large \Fo set, as implied by the following lemma:
\begin{lem}\label{lem:small_boundary}
    Let $\CT$ be a quasitiling of $G$ and let $S$ be the union of all shapes of $\CT$. For any $\eps$ there exists a $\delta$ such that if $F$ is $SS^{-1},\delta)$-invariant, then the union $E$ of all the tiles of $\CT$ that lie on the boundary of $F$ satisfies the inequality $\abs{E'\cap F}<\eps\abs{F}$.
\end{lem}
\begin{proof}
    If an element $f\in F$ belongs for some $i$ to a tile $Sc$ that lies on the boundary of $F$, then $c\in S^{-1}f$, and thus $SS^{-1}f$ contains $Sc$, therefore it is not a subset of $F$. This means $f$ does not belong to $F_{SS^{-1}}$, and the number of such $f$ is at most $\eps\abs{F}$, which concludes the proof. 
\end{proof}

\subsection{Constructing the extension}

Similarly as in the proof of Theorem 3.1 of \cite{DH2}, we need to establish the existence and certain properties of an action disjoint from an invariant measure. The author thanks Benjamin Weiss for supplying the crucial ideas:

\begin{lem}\label{lem:disjoint_group}Let $(X,\Theta^{(X)},\mu)$ be a measurable dynamical system with free actions of an amenable group $G$. There exists an ergodic action $\Theta^{(I)}_\mu$ of $G$ on $I=[0,1]$, such that the measurable dynamical systems $(I,\Theta^{(I)}_\mu,\lambda)$ and $(X,\Theta^{(X)},\mu)$ are disjoint, where $\lambda$ is the Lebesgue measure on $I$.
\end{lem}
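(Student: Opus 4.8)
The plan is to produce $\Theta^{(I)}_\mu$ as a sufficiently ``random'' ergodic action, chosen so that it has no nontrivial common factor with $(X,\Theta^{(X)},\mu)$; by the disjointness criterion it suffices to arrange that any $G$-system which is simultaneously a factor of $(X,\mu)$ and of $(I,\lambda)$ is trivial. A clean way to do this is to take $\Theta^{(I)}_\mu$ to be a Bernoulli action of $G$ (which is possible for every countable amenable $G$) of infinite entropy, or more flexibly a weakly mixing action with some genericity property; the point is that Bernoulli actions of infinite entropy are disjoint from any system of zero entropy, but since $(X,\mu)$ may itself have positive or infinite entropy this crude argument is not enough and one must instead use a ``relative'' genericity. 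So the actual approach I would take is the Baire-category / genericity argument: fix a standard Borel model, consider the Polish space $\mathcal{A}(G,I,\lambda)$ of measure-preserving $G$-actions on $(I,\lambda)$ with the weak topology, and show that the set of actions disjoint from the fixed system $(X,\Theta^{(X)},\mu)$ is residual (a dense $G_\delta$), hence nonempty, and that ergodic actions are also generic, so a generic action is both ergodic and disjoint from $(X,\mu)$.

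Concretely I would proceed in steps. First, recall (Glasner--King type results, valid for any countable amenable $G$) that the conjugacy class of a fixed Bernoulli action is dense in $\mathcal{A}(G,I,\lambda)$ and that weak mixing, and hence ergodicity, is a $G_\delta$ condition, so generic actions are ergodic and weakly mixing. Second, express disjointness from $(X,\Theta^{(X)},\mu)$ as a countable intersection of conditions: for a countable dense family of pairs of finite partitions $(\mathcal{P}$ of $I$, $\mathcal{Q}$ of $X)$ and rationals, the condition ``the only joining of the two systems is the product'' can be written using finitely many correlation inequalities along Følner sets, and the set of actions satisfying each approximate version is open; density follows because Bernoulli actions of infinite entropy are disjoint from everything with a generating partition of bounded entropy, and one exhausts $(X,\mu)$ by such factors. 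Third, intersect these countably many dense open sets with the ergodicity $G_\delta$ to get a nonempty set of actions, pick one, and call it $\Theta^{(I)}_\mu$; transport it from the abstract $(I,\lambda)$ to $([0,1],\lambda)$ via a measure isomorphism (every standard nonatomic probability space is isomorphic to $([0,1],\lambda)$). That produces the required ergodic $\Theta^{(I)}_\mu$ on $I=[0,1]$ disjoint from $(X,\Theta^{(X)},\mu)$.

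The main obstacle is the density (equivalently, existence) step rather than the topological bookkeeping: one needs, for amenable $G$, a supply of $G$-actions on a nonatomic space that are provably disjoint from the given $(X,\mu)$, and then a density argument placing conjugates of such actions densely in $\mathcal{A}(G,I,\lambda)$. For $G=\mathbb{Z}$ this is classical; for general countable amenable $G$ it rests on the Ornstein--Weiss theory (existence of Bernoulli $G$-actions of arbitrary entropy, the fact that a Bernoulli action is disjoint from every zero-entropy action, and the relative version needed to handle $(X,\mu)$ of positive entropy) together with genericity of the Bernoulli conjugacy class. This is precisely the ingredient attributed to Benjamin Weiss in the paragraph preceding the lemma, so I would invoke it as a black box; the remaining work — writing disjointness as a countable family of open conditions along the fixed Følner sequence $(F_n)$ and checking each is open and dense — is routine once that input is in hand.
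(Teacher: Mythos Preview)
Your Baire-category framework in the Polish space $\mathcal{A}(G,I,\lambda)$ is the right setup and matches what the paper does (it cites Foreman--Weiss for the fact that disjointness from a fixed action is a $G_\delta$, and Glasner--King for genericity of ergodicity). The gap is in your density step. The claim that ``Bernoulli actions of infinite entropy are disjoint from everything with a generating partition of bounded entropy'' is false: two Bernoulli $G$-actions are never disjoint, since the larger one contains the smaller as a factor (Ornstein--Weiss), so they share a common nontrivial factor. More to the point, if $h(\mu)>0$ then by the Sinai factor theorem $(X,\mu)$ has a Bernoulli factor, and hence no Bernoulli action whatsoever is disjoint from $(X,\mu)$. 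So your proposed supply of disjoint actions is empty in the positive-entropy case, and ``exhausting $(X,\mu)$ by finite-entropy factors'' does not salvage it: disjointness from every finite-entropy factor of $(X,\mu)$ does not imply disjointness from $(X,\mu)$ itself.

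The paper resolves this with a different, entropy-theoretic idea that you gesture at (``the relative version'') but do not pin down. One passes to the Pinsker factor $\Theta^{(X)}_P$, which has zero entropy; then Bernoulli actions are disjoint from it, so the $G_\delta$ of actions disjoint from $\Theta^{(X)}_P$ is nonempty, hence (being a union of conjugacy classes) dense. Intersecting with the dense $G_\delta$ sets of ergodic actions and of zero-entropy actions yields an ergodic zero-entropy $\Theta^{(I)}_\mu$ disjoint from $\Theta^{(X)}_P$. The final step is to upgrade this to disjointness from $\Theta^{(X)}$: both $\Theta^{(X)}$ and $\Theta^{(X)}_P\times\Theta^{(I)}_\mu$ extend $\Theta^{(X)}_P$, the first as a relatively c.p.e.\ extension and the second as a relatively zero-entropy extension, so by the Glasner--Thouvenot--Weiss relative disjointness theorem they are relatively disjoint over $\Theta^{(X)}_P$, which forces $\Theta^{(X)}$ and $\Theta^{(I)}_\mu$ to be disjoint. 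The Pinsker factor plus relative disjointness is the missing ingredient in your outline.
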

\begin{proof}If $\Theta^{(X)}$ has entropy zero (for $\mu$), then any Bernoulli action is disjoint from it. Otherwise we can apply theorem 21 of \cite{FW}, which states that the set of actions disjoint with a given action is a $G_\delta$. Since this set is an equivalence class of the isomorphism relation, it is dense if it is nonempty. By the proposition preceding that theorem, the set of actions of entropy zero is a dense $G_\delta$. Let $\Theta^{(X)}_P$ be the Pinkser factor of $\Theta^{(X)}$. This factor has entropy zero, so there is an action disjoint from it (any Bernoulli action), therefore the set of actions disjoint from $\Theta^{(X)}_P$, as well as the set of zero entropy actions, are dense $G_\delta$ sets. The set of ergodic actions of $G$ on the interval is also a dense $G_\delta$ (see \cite{GK}), so there exists an ergodic, zero-entropy action $\Theta^{(I)}_\mu$ that is disjoint from $\Theta^{(X)}_P$. The actions $\Theta^{(X)}$ and $\Theta^{(X)}_P\times \Theta^{(I)}_\mu$ are both extensions of $\Theta^{(X)}_P$, where the former is relatively c.p.e, and the latter is a relatively zero entropy extension. By theorem 1 of \cite{GTW}, the actions $\Theta^{(X)}$ and $\Theta^{(X)}_P \times \Theta^{(I)}_\mu$ are relatively disjoint over $\Theta^{(X)}_P$, but that implies that $\Theta^{(X)}$ and $\Theta^{(I)}_\mu$ are also disjoint.
\end{proof}

For any $F\subset G$ and $\mu\in\CM(X)$ let $\mathbf{A}^\Theta_F(\mu)(B)=\frac{1}{\abs{F}}\sum_{g\in F}\mu(g^{-1}B)$.

\begin{lem}\label{lem:products_group}
    Let $(X,\Theta^{(X)})$ be a topological dynamical system with free action $\Theta^{(X)}$ of an amenable group $G$. Let $\mu\in \CM_{\Theta^{(X)}}(X)$, let $U$ be a neighborhood of $\mu \times \lambda$ in $\CM(X \times I)$, let $\Theta^{I}_\mu$ be an ergodic action of $G$ on $I$, such that the systems $(I,\Theta^{I}_\mu,\lambda)$ and $(X,\Theta^{X},\mu)$ are disjoint, and let $t$ be a generic point of $\lambda$. There exists a neighborhood $U_\mu$ of $\mu$ in $\CM(X)$ and a number $N_\mu$, such that for $n>N_\mu$ if $\mathbf{A}^{\Theta^{(X)}}_{F_n}(\delta_{x})\in U_\mu$, then $\mathbf{A}^{\Theta^{(X)}\times\Theta^{(I)}_\mu}_{F_n}(\delta_{(x,t)})\in \CU$.
\end{lem}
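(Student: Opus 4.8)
The goal is a local statement: for $\delta_{(x,t)}$, the empirical measure over $F_n$ in the product system lands in a prescribed neighborhood $\CU$ of $\mu\times\lambda$, provided the empirical measure over $F_n$ of $\delta_x$ in $X$ lands in a suitable neighborhood $U_\mu$ of $\mu$. The natural strategy is to argue by contradiction using compactness of $\CM(X)$ and $\CM(X\times I)$ together with the disjointness hypothesis. First I would fix $\CU$ and recall that $\mu\times\lambda$ is the \emph{unique} joining of $(X,\mu)$ and $(I,\lambda)$ compatible with their marginals, because $\Theta^{(I)}_\mu$ is disjoint from $\mu$. I would also note that, since $t$ is generic for $\lambda$ under $\Theta^{(I)}_\mu$, every weak-$*$ limit of $\mathbf{A}^{\Theta^{(I)}_\mu}_{F_n}(\delta_t)$ equals $\lambda$; and the $I$-marginal of $\mathbf{A}^{\Theta^{(X)}\times\Theta^{(I)}_\mu}_{F_n}(\delta_{(x,t)})$ is exactly $\mathbf{A}^{\Theta^{(I)}_\mu}_{F_n}(\delta_t)$, hence all its limit points have $I$-marginal $\lambda$.

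Now suppose the conclusion fails: then there is a sequence $n_j\to\infty$ and points $x_j$ with $\mathbf{A}^{\Theta^{(X)}}_{F_{n_j}}(\delta_{x_j})$ converging to $\mu$ (taking a decreasing sequence of candidate neighborhoods $U_\mu$) while $\mathbf{A}^{\Theta^{(X)}\times\Theta^{(I)}_\mu}_{F_{n_j}}(\delta_{(x_j,t)})\notin\CU$. By compactness of $\CM(X\times I)$, pass to a subsequence so that these product empirical measures converge to some $\rho\in\CM(X\times I)$; since the sets $F_n$ form a \Fo sequence, $\rho$ is $\Theta^{(X)}\times\Theta^{(I)}_\mu$-invariant. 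Its $X$-marginal is the limit of $\mathbf{A}^{\Theta^{(X)}}_{F_{n_j}}(\delta_{x_j})=\mu$, and its $I$-marginal is $\lambda$ by the preceding paragraph. So $\rho$ is a joining of $(X,\mu)$ with $(I,\lambda)$; by disjointness $\rho=\mu\times\lambda\in\CU$, contradicting $\rho\notin\CU$ (here I use that $\CU$ is open, hence contains all but finitely many terms of a sequence converging to a point of $\CU$). This yields the desired $U_\mu$ and $N_\mu$.

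One technical point to handle carefully is the passage from "the conclusion fails" to a single convergent sequence: the statement quantifies over all $x$ with $\mathbf{A}^{\Theta^{(X)}}_{F_n}(\delta_x)\in U_\mu$, so negating it gives, for each neighborhood $U_\mu$ and each $N$, some $n>N$ and some $x$; a standard diagonal argument over a countable neighborhood basis of $\mu$ produces the sequence $(n_j,x_j)$ with $\mathbf{A}^{\Theta^{(X)}}_{F_{n_j}}(\delta_{x_j})\to\mu$. The main obstacle, and the only place the hypotheses genuinely enter, is the identification of the limit joining $\rho$ with $\mu\times\lambda$: this rests on (i) genericity of $t$ to pin the $I$-marginal to $\lambda$, and (ii) the disjointness of $\Theta^{(I)}_\mu$ from $(X,\mu)$ supplied by Lemma \ref{lem:disjoint_group} to force uniqueness of the joining. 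Everything else — invariance of weak-$*$ limits of \Fo averages, continuity of the marginal projections, openness of $\CU$ — is routine.
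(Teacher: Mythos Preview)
Your proposal is correct and follows essentially the same approach as the paper's own proof: argue by contradiction, extract a sequence whose $X$-averages tend to $\mu$ but whose product averages stay outside $\CU$, pass to a weak-$*$ convergent subsequence, and use invariance of \Fo limits together with genericity of $t$ and disjointness to identify the limit as $\mu\times\lambda$, a contradiction. If anything, you are slightly more explicit than the paper about the diagonal argument producing the sequence and about the role of openness of $\CU$.
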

\begin{proof}
    Assume the lemma does not hold. Then there exist a sequence $(x_k)$ such that $\mathbf{A}^{\Theta^{(X)}}_{F_{n_k}}(\delta_{x_k})\to \mu$, but the measures $\mathbf{A}^{\Theta^{(X)}\times\Theta^{(I)}_\mu}_{F_{n_k}}(\delta_{(x_k,t)})$ all lie outside $U$. Passing to a subsequence, we can assume that the sequence $\mathbf{A}^{\Theta^{(X)}\times\Theta^{(I)}_\mu}_{F_{n_k}}(\delta_{(x_k,t)})$ converges. By the properties of the \Fo sequence the limit measure must be invariant. Furthermore, its marginal on $X$ is $\mu$ (due to the assumption on averages for $x_k$), and its marginal on $I$ is $\lambda$ (since $t$ is generic for $\lambda$). The only such measure is $\mu \times \lambda$, but the measures $\mathbf{A}^{\Theta^{(X)}\times\Theta^{(I)}_\mu}_{F_{n_k}}(\delta_{(x_k,t)})$ are all outside its neighborhood $U$, which is a contradiction. 
\end{proof}

We now have all the tools needed to prove the main theorem of this paper.
\begin{proof}[Proof of Theorem \ref{thm:faithful_group}]
    Let $(F_n)$ be the \Fo sequence of $G$ and $e$ the neutral element. Recall that we assume that $e \in F_n$ for all $n$, and that each $F_n$ is symmetric.

    Let $I=[0,1]$. Any continuous function $f:X \to [0,1]$ induces a partition $\mathcal{A}_f$ of $X \times I$ into two sets: $\set{(x,t):0 \leq t <f(x)}$ and $\set{(x,t):f(x)\leq t <1}$ (i.e. the sets of points below and above the graph of $f$). For a family $\CF$ of continuous functions we denote by $\CAF$ the partition $\bigvee_{f \in \CF}\CA_f$. Two useful observations are that $\CA_{\CF \cup \CG}=\CA_\CF \vee \CA_\CG$ and that $\CF \subset \CG$ implies $\CA_\CF \prec \CA_\CG$.
    
    For every $j$ let $\CF_j$ be a family of continuous functions from $X$ into $I$ and let $\CA_j$ be the partition of $X \times I$ generated by $\CF_j$. By adding the appropriate functions to the relevant families, we can assume that $\bigvee_{g \in F_{j-1}}g^{-1}\CA_{j-1} \prec \CA_j$ (this condition, similarly to the proof of Theorem 3.1 in \cite{DH2}, ensures that if we treat the elements of these partitions as symbols in consecutive alphabets, then any symbol in a sufficiently far row of the array system determines the symbols in preceding rows of the same column, even if the preceding rows have been modified by a continuous transformation). Let $\zeta_j$ be the diameter of the largest set of $\CA_j$. We will assume that $2\zeta_{j+1}<\zeta_j$.

    Consider the set of all arrays $y=y_{j,g}$ $(j \ge 1,g \in G)$ such that $y_{j,g} \in \CA_j$ (as mentioned above, we treat the elements of the partitions as symbols in the alphabets) and define a set $K_{j,g}(y)=\set{x \in X: d(x,\pi^{(1)}(y_{j,g}))\leq\zeta_j}$. An array $y$ is said to satisfy the \emph{column condition} if $K_{j+1,g}(y)\subset K_{j,g}(y)$ for all $j$ and $g$. It is easy to see that if an array $y$ satisfies the column condition, then for every $g$ there exists exactly one point
    \[x_g(y)=\bigcap_{j=1}^{\infty}K_{j,g}(y).\]
    
    Let $Y_\mathsf{col}$ be the set of all arrays $y$ such that $y$ satisfies the column condition and $x_{g}(y)=g(x_e(y))$. Let $\Theta^{(Y)}$ be the standard horizontal shift action on $Y_\mathsf{col}$ i.e. let $(gy)_{j,h}=y_{j,hg}$. Define $\pi_X(y)=x_e(y)$. Observe that $\pi_X$ commutes with the action of $G$: for any $y$ and $g$ we have
    \[\pi_X(gy)=x_e(gy)=x_g(y)=g(x_e(y))=g\pi_X(y).\]
    Therefore $Y_\mathsf{col}$ is an extension of $X$ by the map $\pi_X$. 
    
    We will construct countably many dynamical systems which will all be subsystems of $Y_{\mathsf{col}}$. When discussing sets of $\Theta^{(Y)}$-invariant measures, we will treat them as subsystems of  $\CM_{\Theta^{(Y)}}(Y_{\mathsf{col}})$ (which in particular gives meaning to the notion of proximity between measures on different systems of this class). 
    Fix some sequence $(\eps_k)$ of positive numbers decreasing to $0$. The systems $Y_k\subset Y_\mathsf{col}$ will be constructed in such a way that $\CM_{\Theta^{(Y)}}(Y_k)\subset \CU_k$, where $\CU_k$ will be a sequence of subsets of $\CM_{\Theta^{(Y)}}(Y_{\mathsf{col}})$ satisfying the following conditions:
    \renewcommand{\theenumi}{U\arabic{enumi}}
    \begin{enumerate}
        \item $\CU_{k+1}\subset{\CU_k}$.
        \item For any $k>0$ and any measure $\nu \in \CU_k$ we have $h(\nu,\mathcal{R}_{k}|X)<\eps_k$, (where $\mathcal{R}_{k}$ is the partition defined by cylinders of height $k$ determined by coordinate $e$).
        \item For any $k>0$ and any two measures $\nu_1,\nu_2 \in \CU_k$ the condition $\pi_X(\nu_1)=\pi_X(\nu_2)$ implies that $d^*(\nu_1,\nu_2)<\eps_k$, where $d^*$ is a metric on $\CM_{\Theta^{(Y)}}(Y_\mathsf{col})$ consistent with the weak-star topology.
    \end{enumerate}
    \renewcommand{\theenumi}{\arabic{enumi}}

    To begin with, let $Y_0$ be the closure of the set of array-names of points in $X \times I$ under the action of $\Theta^{(X\times I)}$ with respect to the partitions $\CA_j$. In other words, $Y_0$ is the closure of the set of all points $y \in Y_\mathsf{col}$ such that for some pair $(x,t) \in X \times I$ and for any $j$ and $g$ we have $(\Theta^{(X)}(g)(x) \in y_{j,g}$. 
    
    By a standard argument, $Y_0$ is an extension of $X \times I$ (we will denote the corresponding map by $\pi_0$) as well as of $X$ itself and the following diagram commutes:
    %\[\begin{array}{ccc}
    %&&Y_0\\
    %&\swarrow&\downarrow \\4
    %X\times I&&\downarrow \\
    %&\searrow&\downarrow \\
    %&&X
    %\end{array}\]
    \[
    \xymatrix{
        &Y_0\ar[ld]_{\pi_0}\ar[dd]^{\pi_X}\\
        X\times I\ar[rd]^{\pi^{(1)}}&\\
        &X
    }
    \]
   Let the set $\CU_0$ be all of $\CM_{\Theta^{(Y)}}(Y_\mathsf{col})$ (all our requirements on the properties of $\CU_k$ only apply to the case $k>0$). 
    
    There are two important observations to be made here: Firstly, the only points in $X \times I$ that have multiple preimages under $\pi_0$ are the ones whose orbits enter the graph of a function from some $\CF_j$ (because graphs of continuous functions are closed). The product measure $\mu \times \lambda$ of the graph of any function is $0$ (recall that $\lambda$ denotes the Lebesgue measure on the interval). Therefore whenever $\nu$ is a measure on $Y_0$ that factors onto a measure $\mu \times \lambda$ on $X \times I$, then the set of points in $X \times I$ with multiple preimages by $\pi_0$ has zero measure $\mu\times\lambda$. This implies that the measure-theoretic systems $(Y_0,\Theta^{(Y)},\nu)$ and $(X \times I,\Theta^{(X \times I)},\mu \times \lambda)$ are isomorphic and $\nu$ is a unique preimage of $\mu\times\lambda$. Secondly, any block determined by columns from $F_n$ in rows from $1$ to $j$ in $Y_0$ is associated with a unique cell of $\bigvee_{g\in F_n}\Theta^{(X\times I)}(g^{-1})(\CA_j)$, the closure of which is the image (by $\pi_0$) of this block. 
    
    We will now proceed to create the systems $Y_k$, requiring them to have the following properties:
    \renewcommand{\theenumi}{Y\arabic{enumi}}
    \begin{enumerate}
        \item For each $k$, $\CM_{\Theta^{(Y)}}(Y_k) \subset \CU_k$.
        \item For each $k$, $Y_k=\Phi_k(Y_{k-1})$, where $\Phi_k$ is a conjugacy, and there exists an increasing sequence $j_k$ such that $\Phi_k$ leaves the rows with indices greater than or equal to $j_{k}$ unchanged.
    \end{enumerate}
    \renewcommand{\theenumi}{\arabic{enumi}}
    
    Observe that the property Y2 ensures that the diagram 
    \[
    \xymatrix{
        &Y_{0}\ar@{<->}[r]^{
            \Phi_0}\ar[ld]_{\pi_0}\ar[dd]^<<<<<<{\pi_X}&Y_1\ar[ldd]^<<<<<<{\pi_X}\ar@{<->}[r]^{\Phi_1}&Y_2\ar[lldd]^<<<<<<{\pi_X}\ar@{<->}[r]^{\Phi_2}&\cdots\\
        X\times I\ar[rd]^{\pi^{(1)}}\\
        &X\\
    }
    \]
    commutes and that for $j\geq j_k$ we still have the one-to-one correspondence between rectangles of size $j\times n$ in $Y_k$ and the cells of $\bigvee_{g\in F_n}\Theta^{(X\times I)}(g^{-1})(\CA_j)$, since this correspondence depends only on the contents of row $j$. Throughout, $\pi_k$ will denote the factor map from $Y_k$ onto $X\times I$ defined by composing the factorization $\pi_0$ of $Y_0$ with the conjugacy between $Y_0$ and $Y_k$.
    
    %% Replacements:  $(S_k,B_k(\cdot))$ --> $(\CT_k)$, $(S^*_k,B^*_k(\cdot))$ ---> $\CT_k^*$, $\CT_k -> \CT_k^+$
    With every $Y_k$ we will associate three quasitilings and a certain set from the \Fo sequence, which we will denote by $H_k$. Two of the quasitilings, $\CT_k$ and $\CT_k^*$, will be dynamical quasitilings used to construct the next system via a block code (the tiles of $\CT^*_k$ are an analogue of the $k$-rectangles from the proof of theorem 3.1 of \cite{DH2}, whereas the tiles of $\CT_k$ are an analogue of the subrectangles between successive jump points). The third quasitiling, $\CT_k^{+}$, is static (i.e. it's a single quasitiling rather than a set of quasitilings corresponding to individual elements of $Y_k$) and is used for verifying that all invariant measures on $Y_k$ and subsequent systems are in $\CU_k$. The tiles of $\CT^+_k$ will be so large that all ,,test'' blocks of order $k$ will appear in it with controlled frequencies in any point of $Y_k$, while $H_k$ will have the same property even for subsequent systems. In addition, let $S_k,S_k^*,$ and $S_k^+$ denote sets which are unions of all the shapes of the respective quasitiling.

    We begin with the system $Y_0$, for which we set $\CU_0=\CM_{\Theta^{(Y)}}(Y_\mathsf{col})$ (which makes all conditions on invariant measures satisfied trivially), $j_0=1$, and $\CT_0,\CT_0^*, \CT_0^+$ are all trivial tilings of $G$ by single-element tiles. Also, let $H_0=F_1$.

    We will assume (and will guarantee by our inductive construction) that after the system $Y_{k-1}$ and the corresponding quasitilings have been constructed, then for any $l<{k-1}$ every tile of $\CT_l$ intersects at most one tile of $\CT_{k-1}$. 
    
    \medskip
    Let $\CP_k$ be the set of all $\Theta^{(Y)}$-invariant measures on $Y_{k-1}$, such that if $\nu \in \CP_k$, then $\pi_{k-1}(\nu)=\mu \times \lambda$ for some $\mu \in \CM_{\Theta^{(X)}}(X)$ (where $\lambda$ is the Lebesgue measure  on $I$). By the remarks we have made previously, we know that for any $\nu \in \CP_k$ the map $\pi_{k-1}$ is an isomorphism between $(Y_{k-1},\Theta^{(Y)},\nu)$ and $(X \times I,\Theta^{(X \times I)},\mu \times \lambda)$, therefore $h(\nu|X)=h(\mu \times \lambda|X)=0$, and any $\mu \in \CM_{\Theta^{(X)}}(X)$ has exactly one preimage in $\CP_k$. Since the function $h(\cdot|X)=0$ is zero on $\CP_k$ then so is the upper semicontinuous function $h(\cdot,\CR_k|X)$. This implies the existence of a convex, open set $\CU_k\supset \CP_k$ satisfying the conditions U1--U3 (each of them is satisfied on an open set containing $\CP_k$, so we can take $\CU_k$ to be a convex neighborhood of $\CP_k$ contained in the intersection of such sets).

    We want to construct $Y_k$ in such a way that $\CM_{\Theta^{(Y)}}(Y_k) \subset \CU_k$. There exist some $\delta_k$, $j_k$ and $d_k$, such that if two measures differ by less than $\delta_k$ on all cylinders determined by coordinates $(j,g)$ for $j<j_k$, $g \in F_{d_k}$, and one of those measures is in $\CP_k$, then the other must be in $\CU_k$. Denote the family of such ,,test'' blocks by $\CQ_k$. By the same reasoning as in theorem 3.1 of \cite{DH2} (since we assumed that $\bigvee_{g \in F_{j-1}}g^{-1}\CA_{j-1} \prec \CA_j$), we can assume that $j_k$ is so large that for any $y\in Y_{k-1}$ and any $g\in G$ the symbol $y_{j_k,g}$ determines all the $y_{j,g}$'s for $j\leq j_k$. 
     
    We will now formulate one final inductive assumption. It states that $Y_{k-1}$ has the following property Y3($l$) for all $l<{k-1}$:
    \renewcommand{\theenumi}{Y3($l$)}
    \begin{enumerate}
        \item For any $y\in Y_{k-1}$ there exists some $\nu_{y,l} \in \CP_l$, such that for all $Q \in \CQ_l$ we have $\abs{\fr_{y(H_l)}(Q)-\nu_{y,l}(Q)}<\delta_l$.
    \end{enumerate}
    \renewcommand{\theenumi}{\arabic{enumi}}

We begin by constructing the quasitiling $\CT_k$. Define the set \[E=S_1S^{-1}_1S_2S^{-1}_2\cdots S_{k-1}S^{-1}_{k-1}.\] Apply lemma \ref{lem:disjoint_quasitiling} with the constant $\eta_k$ (which depends only on $\delta_k$ and will be specified later) and the set $H_{k-1}^2EF_{d_k}$, obtaining a disjoint, dynamical quasitiling $\hat{\CT}_k$, whose $H_{k-1}^2EF_{d_k}$-cores are a $(1\!-\!\eta_k)$-covering family. For every tile $\hat{T}$ of $\hat{\CT}_k$ define $\widetilde{T}=(\hat{T})_{H_{k-1}^2E}$. This yields a disjoint, dynamial quasitiling $\widetilde{\CT}_k$, which is $(1\!-\!\eta_k)$-covering. 
%%%ZMIENIĆ!!!

Now, for every $x$ and every tile $\widetilde{T}$ of $\widetilde{\CT}_k(x)$, add to $\widetilde{T}$  all the tiles of $\CT^+_{k-1}$ that have nonempty intersection with it. Obviously, no tile of $\CT^+_{k-1}$ will lie on the boundary of such an enlarged set. Observe that this enlarged set is contained in $S_{k-1}S^{-1}_{k-1}\widetilde{T}_k$, therefore also in $\hat{T}_k$. Now add to the obtained set all the tiles of $\CT^+_{k-2}$ that have nonempty intersection with it. Again, observe that no tile of either $\CT^+_{k-2}$ or $\CT^+_{k-1}$ (thanks to the first inductive assumption on the tilings) lies on the boundary of this enlarged set. Also, the new set is a subset of $S_{k-2}S^{-1}_{k-2}S_{k-1}S^{-1}_{k-1}\widetilde{T}_k$. By continuing this procedure for $\CT^+_{k-3},\ldots,\CT^+_1$, we obtain a set $T_k\supset \widetilde{T}_k$ with the following properties: $T_k$ is a subset of $E\widetilde{T}_k$ (and thus also of $(\hat{T}_k)_{H_{k-1}^2}$) and for every $l<k$ no tile of $\CT^+_l$ lies on its boundary.\label{txt:tilecorrection} As $(T_k)_{F_{d_k}}\supset(\widetilde{T_k})_{F_{d_k}}=({\hat{T}_k})_{H_{k-1}^2EF_{d_k}}$, we see that the union of $F_{d_k}$-cores of the tiles of $\CT_k$ has lower Banach density at least $(1-\eta_k)$. In addition, the condition $T_k\subset (\hat{T}_k)_{H_{k-1}^2}$ combined with the disjointness of $\hat{\CT}_k$ implies that for every $y$ the set $H_{k-1}$ has nonempty intersection with at most one tile of $\CT_k$.

Let $E_{k-1}=\bigcup_{l=1}^{k-1}S^*_l(S^*_l)^{-1}H_{l-1}^2$. According to lemma \ref{lem:disjoint_quasitiling} we can also assume that every tile $\hat T_k$ of $\hat{\CT}_k$ is $(E_{k-1},\eta_k)$-invariant, which makes the corresponding tile $T_k$ $(E_{k-1},2\eta_k)$-invariant. By an analogous reasoning, we can require that $(T_k)_{E_{k-1}^{-1}F_{d_k}}$ be a $(1-\eta_k)$-subset of $T_k$.

\begin{figure}
    \centering
    \includegraphics[width=6truecm]{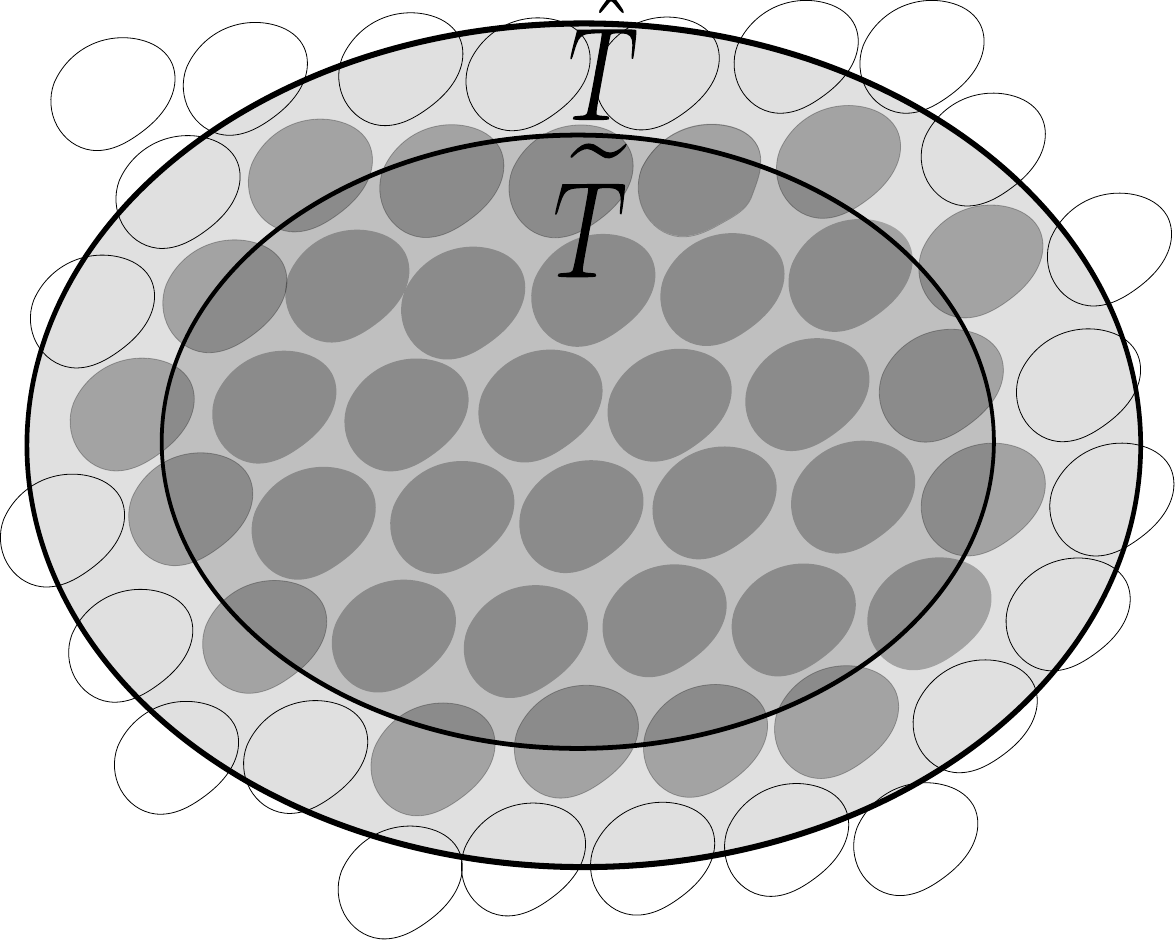}
    \includegraphics[width=6truecm]{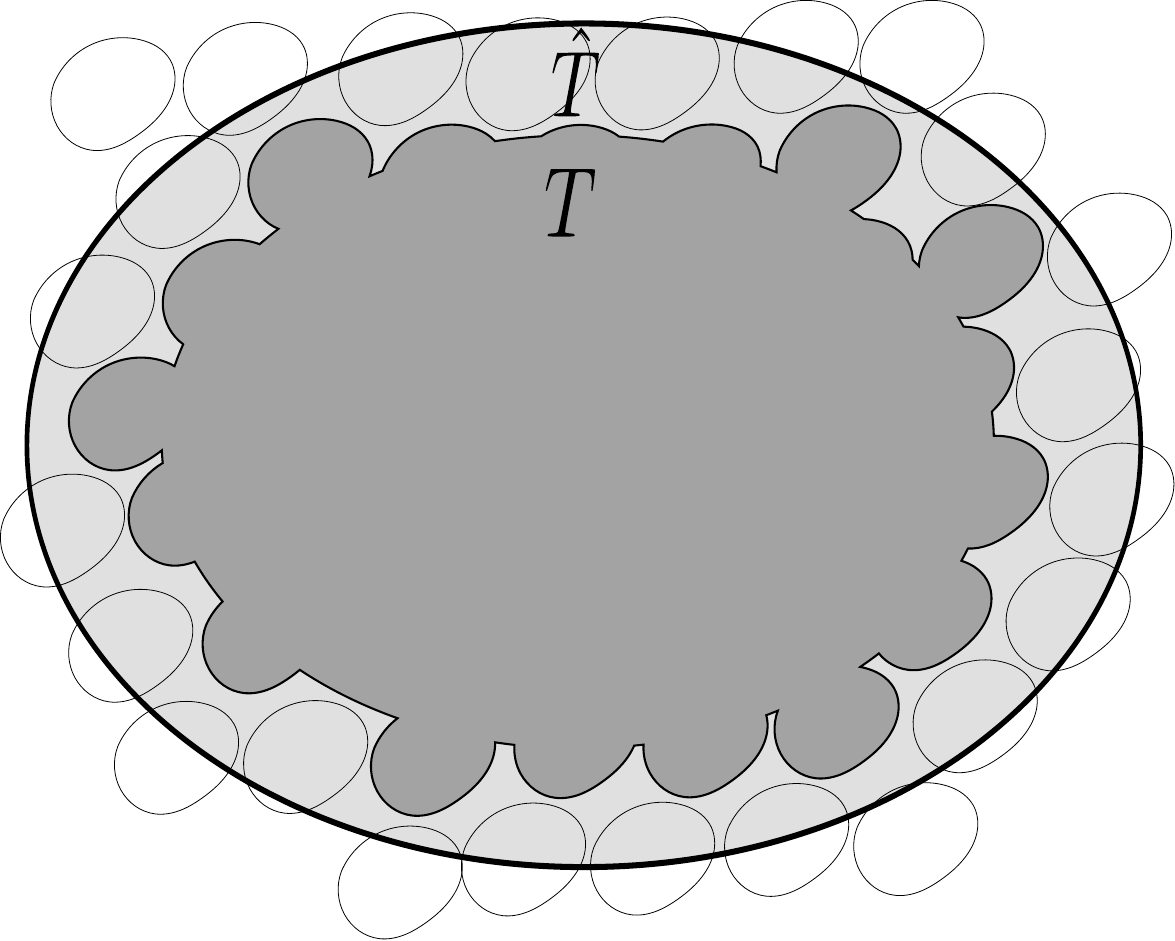}
    \caption{Construction of the tiles of $\CT_k(\cdot)$. The small sets are the tiles of $\CT^+_{k-1}$. The procedure of adding tiles from $\CT^+_l$ for $l<k-1$ is not depicted.}
    
\end{figure}

The next object we construct, the quasitiling $\CT^*_k$, will also be obtained from lemma \ref{lem:disjoint_quasitiling}. Before we apply the lemma, we must carefully choose its parameters.

For every $S\in\CS(\CT_k)$ the set of $y\in Y_{k-1}$ for which $Se\in \CT_k(y)$ is clopen (because $\CT_k$ is a dynamical quasitiling). Let $M_S$ be the image of this set by $\pi_{k-1}$. $M_S$ is the union of closures of finitely many atoms of $\bigvee_{g\in F_m}\Theta^{(X \times I)}(g^{-1})\CA_j$ for some $m$ and $j$. This implies that for every $\mu \in \CM_{\Theta^{(X)}}(X)$ the boundary of $M_S$ has measure $\mu \times \lambda$ equal to zero, therefore for some $t_\mu$ the horizontal section of $M_S$ at level $t_\mu$, i.e. the set $M_{S,t_\mu}=\set{x\in X:(x,t_\mu)\in M_S}$, has boundary of measure $\mu$ zero. Indeed, the set of such $t_\mu$ has Lebesgue measure $1$, so we can also choose $t_\mu$ so that it is generic for $\lambda$ with respect to the action $\Theta^{(I)}_\mu$ disjoint from $\mu$ and obtained from lemma \ref{lem:disjoint_group}. 
%%%RYSUNEK!!!
Let $M_{S,\mu}=M_{S,t_{\mu}}\times I$. Observe that since the quasitiling $\CT_k$ is disjoint and  $(1\!-\!\eta_k)$-covering, the images $\Theta^{(X \times I)}(g)(M_{S,\mu})$ for different $S\in \CS(\CT_k)$ and $g\in S$ are disjoint, and their total measure is $1-\eta_k$ for every $\Theta^{(X \times I)}$-invariant measure on $X\times I$. Furthermore, for every $S\in \CS(\CT_k)$ and every $g\in S$ the partition 
\[\CA'_{\mu,g}=\set{M_{S,\mu},(X\times I)\setminus M_{S,\mu}}\vee\set{\Theta^{(X \times I)}(g^{-1})(\pi_{k-1}(Q)):Q\in \CQ_k}\]
%\left(\bigvee_{h \in F_{d_0}}\Theta^{(X \times I)}((hg)^{-1})\CA_{j_k}\right)\] 
has boundaries of measure $\mu \times \lambda$ equal to zero. 

\begin{figure}
\centering
\includegraphics[width=6truecm]{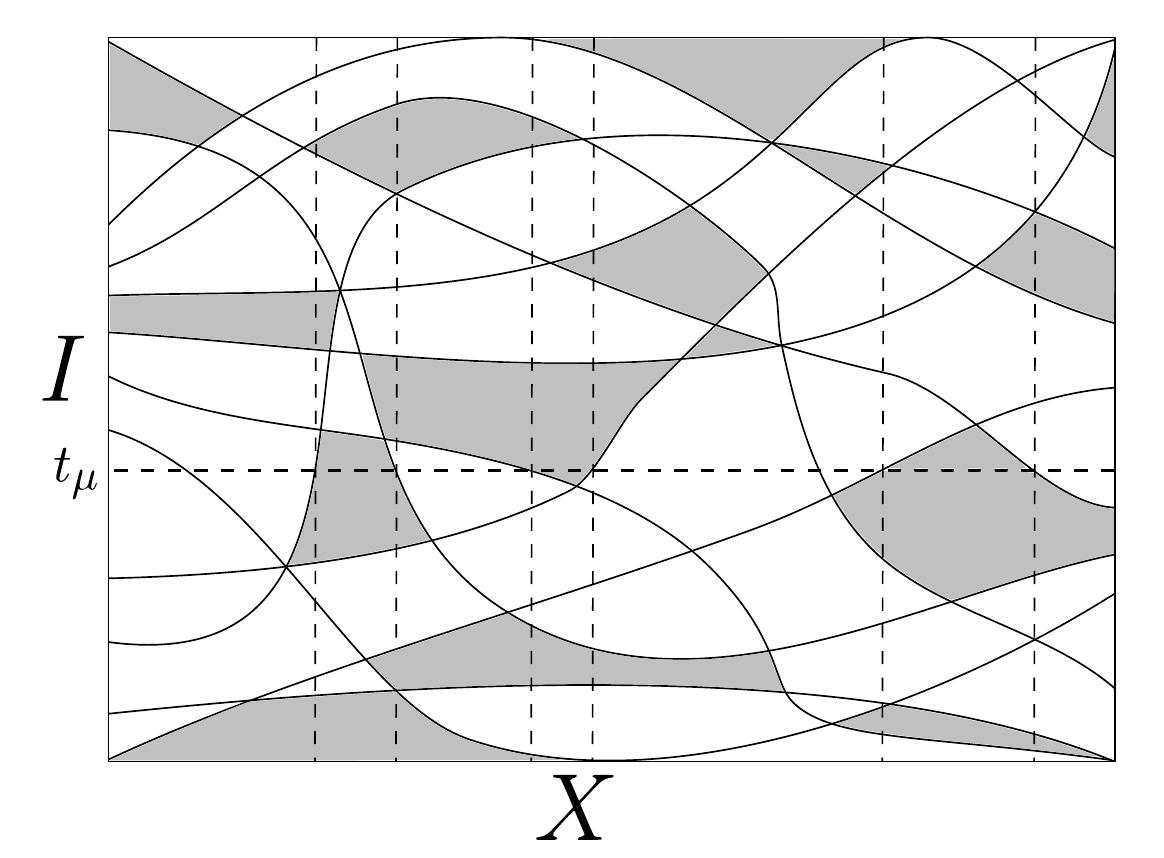}
\includegraphics[width=6truecm]{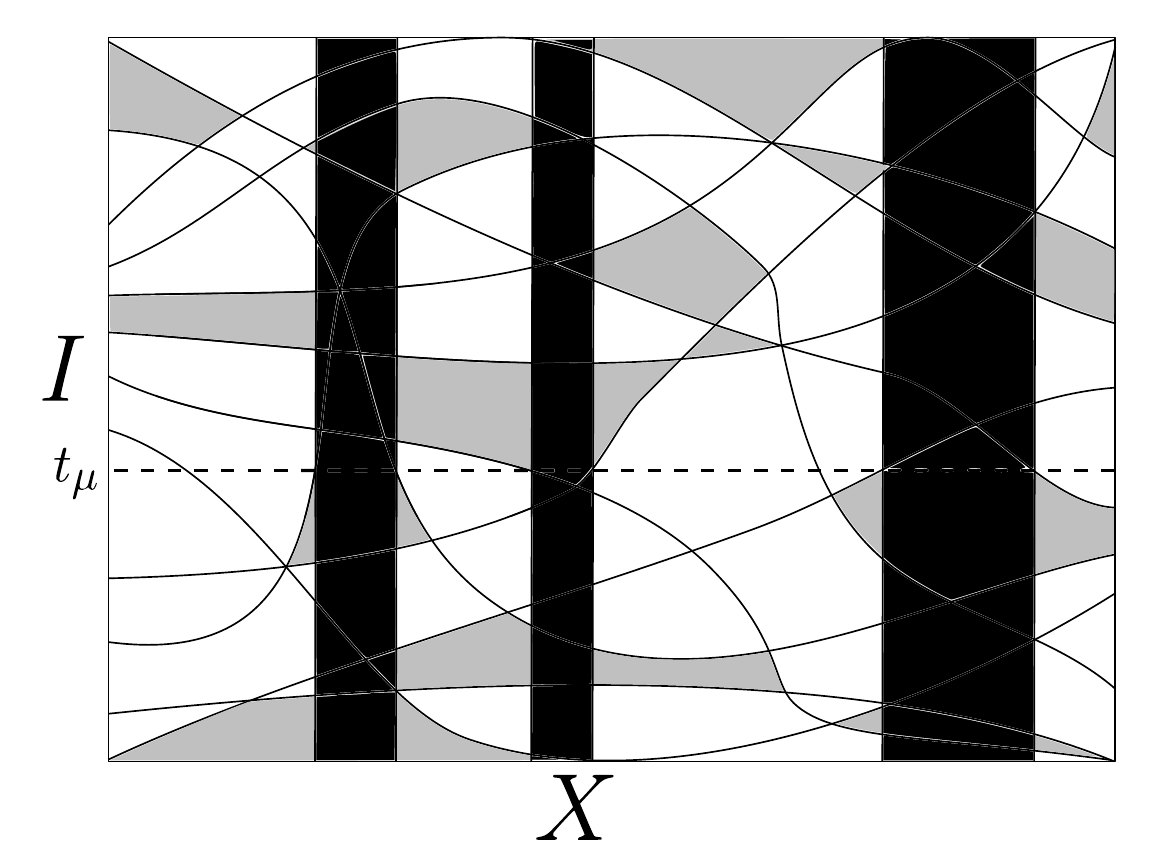}
\caption{Construction of the set $M_{S,\mu}$. The curves represent the graphs of the functions determining the partition $\bigvee_{g\in F_m}\Theta^{(X \times I)}(g^{-1})\CA_j$. The gray area in the left figure is the set $M_S$, while the black area in the right figure is the set $M_{S,\mu}$.}

\end{figure}

Therefore for every $A \in \CA'_{\mu,g}$ the function $\nu \mapsto \nu(A)$ is continuous at $\mu \times \lambda$, so there exists a neighborhood $U$ of $\mu \times \lambda$ in the space $\CM(X \times I)$ such that for every $\nu \in U$ we have
\[\abs{\nu(A)-(\mu\times\lambda)(A)}<\frac{\eta_k}{\abs{S}}\quad\text{for every }S\in \CS(\CT_k), g\in S_k\text{ and }A \in \CA'_{\mu,g}.\]

Apply lemma \ref{lem:products_group} for the measure $\mu$, neighborhood $U$ and action $\Theta_\mu^{(I)}$. We obtain a neighborhood $U_\mu$ of $\mu$ in $\CM(X)$, as well as a number $N_\mu$, such that for $n>N_\mu$ if  $\mathbf{A}^{\Theta^{(X)}}_{F_n}\delta_x\in U_\mu$, then $\mathbf{A}^{\Theta^{(X \times I)}_\mu}_{F_n}\delta_{(x,t_\mu)}\in U$. The $U_\mu$'s are an open cover of the compact set $\CM_{\Theta^{(X)}}(X)$, so there exists a finite family $\CW$ of $\Theta^{(X)}$-invariant measures on $X$, such that the union $\bigcup_{\mu \in \CW}U_\mu$ contains all of $\CM_{\Theta^{(X)}}(X)$. This union is an open set in $\CM(X)$, therefore there exists some $N$ such that for every $x \in X$ the average $\mathbf{A}^{\Theta^{(X)}}_{F_N}\delta_{x}$ is in $U_\mu$ for some $\mu \in \CW$. We can also assume that $N>N_\mu$ for all $\mu \in \CW$. 

Now apply lemma \ref{lem:disjoint_quasitiling} to $Y_{k-1}$, with the set $E_{k-1}S_k$ and the constant $\eta_k$, obtaining a quasitiling $\CT^*_k$. We can assume that every tile of $\CT^*_k$ is a $(1-\frac{\eta_k}{\abs{S_k}})$-subset of a \Fo set $F_N$ (see the remark at the end of the proof of lemma \ref{lem:disjoint_quasitiling}), and that all tiles of $\CT^*_k$ satisfy lemma \ref{lem:frequency} as the set $E$ (with $F_{d_k}$ as the set $A$ and $\eta_k$ as $\eps$), which means that the frequency of the ``test'' blocks in far \Fo sets differ by at most $\eta_k$ from their average frequencies in the tiles of $\CT^*_k$.

We will now modify the quasitiling $\CT_k$, leaving only the tiles which are entirely contained in tiles of $\CT^*_k$: for each $y\in Y_{k-1}$ replace $\CT_k(y)$ (without changing notation) with the set $\set{T\in \CT_k(y): \exists_{T^*\in\CT_k^*(y)}T_k\subset T_k^*}$. Observe that this does not affect the frequency with which the tiles of $\CT_k$ occur in the tiles of $\CT_k^*$.

We can now begin to construct the mapping $\Phi_k$, and thus also the system $Y_k$. Let $P$ be any block in $Y_{k-1}$ of height $j_k$, determined by the coordinates $S^*$ for some $S^*\in \CS(\CT^*_k)$. ($P$ is an analogue of the encoded $k$-rectangle in the proof of theorem 3.1 of \cite{DH2}). Fix some $y_P\in Y_{k-1}$, so that $e$ is the center of some tile $T^*$ of $\CT^*_k$ and $P=y_P(S^*)$ where $S^*$ is the shape of $T^*$. The image of $P$ by $\pi_{k-1}$ in $X \times I$ is the closure of a union of some atoms of $\CA_{j_k}^{F_N}$. Let $x_P=\pi_X(y_P)$ and observe that $x_P$ belongs to the projection of one of these atoms onto $X$. 

We should also make note of the following fact which we will need towards the end of the proof: $N$ is so large that the average $\mathbf{A}^{\Theta^{(X)}}_{F_N}\delta_{x_P}$ is in the set $U_{\mu_P}$ for some $\mu_P \in \CW$, therefore (by lemma \ref{lem:products_group} and our choice of constants)
\begin{equation}\label{eq:products_group}\abs{\mathbf{A}^{\Theta^{(X \times I)}_{\mu_P}}_{F_N}\delta_{(x_P,t_{\mu_P})}(A)-(\mu_P\times\lambda)(A)}<\frac{\eta_k}{\abs{S_k}}\quad\text{for all }A \in \CA'_{\mu,g}\end{equation}
(for every $g\in S_k$). 

For every coordinate $b$ of $P$ such that $b$ is a center of $\CT_k$, let $y_b$ be any preimage by $\pi_{k-1}$ of $(x_P,\Theta^{(I)}_{\mu_P}(b)t_{\mu_P})$ (the $y_b$'s are an analogue of the $y_i$'s in the proof of theorem 3.1 of \cite{DH2}). 

%%%%%%%%%%%%%%%%%%%%%%%%%%%%%%%%%%%%%%%%%%%%%%%%%%%%% THE HARD PART

For each $l<k$ define two sets $\hat{\CT}_l(P),\hat{\CT^*}_l(P)\subset G$ (we will later use those to define the dynamical quasitilings $\CT_l,\CT_l^*$ for $Y_k$). Initially let $\hat{\CT}_l(P)$ and $\hat{\CT^*}_l(P)$ consist of the tiles of $\CT_l(y_P)$ and $\CT^*_l(y_{P})$ (respectively) which have nonempty intersection with $S^*$ .

%%%%% ^^^^ COŚ POPRAWIĆ?

We can now describe the construction of $\Phi_k(P)$. For every $b\in \CC(\CT_k)$ such that the tile $Sb$ with centre $b$ is such that $E_{k-1}S$ is entirely contained in  $S^*$, do the following:
\begin{enumerate}
    \item Replace in $P$ all symbols in rows $1$ to $j_{k}$ at coordinates $Sb$ with the symbols that appear in $y_b$ at these coordinates. 
    
    Additionally, for $l<k$ remove from $\hat{\CT_l}(P)$ all the tiles of  $(\CT_l(y_P))$ which are subsets of $Sb$ and add to it all tiles of $\CT_l(y_b)$ which intersect $Sb$. Modify $\hat{\CT^*}_l(P)$ in the analogous manner. The interpretation of this is that within the tile $S$ we transfer from $y_b$ not only the symbols, but also the tiles of $\CT_l$ and $\CT^*_l$.
    
    \item Let $l=k-1$. For every $g$ for which $H_{l-1}g$ lies on the boundary of $Sb$, in the ,,area'' (this notion will be specified below) of $H_{l-1}g$ we undo the changes made in step $l$ (within $Sb$ we restore the previous content of $y_b$, and outside $Sb$ we restore the previous content of $y_P$). 
    
    The precise modification is as follows: For every tile $T^*_l$ of $\CT_l*(y_b)$ which has nonempty intersection with $H_{l-1}g$, replace the contents of coordinates from $T^*_l\cap Sb$ in rows from $1$ to $j_{l}$ with the contents of the corresponding coordinates $T^*_l\cap Sb$ in $\Phi_{l}^{-1}(y_b)$. Also, remove $T^*_l$ from $\hat{\CT^*}_l(P)$ and remove from $\hat{\CT_l}(P)$ all tiles which are contained in $T^*_l$.
    
    Now do the same for $y_P$ rather than $y_b$: for every tile $T^*_l$ of $\CT^*_l(y_P)$ which has nonempty intersection with $H_{l-1}g$, replace the contents of coordinates $T^*_l \setminus Sb$ in rows from $1$ to $j_{l}$ with the contents of the corresponding coordinates  $T^*_l\setminus Sb$ in $\Phi_{l}^{-1}(y_P)$. Also, remove $T^*_l$ from $\hat{\CT^*}_l(P)$, and remove from $\hat{\CT}_l(P)$ all tiles contained in $T^*_l$.
    
    Observe that after such a modification all the blocks in $\Phi_k(P)$ determined by the coordinates $H_{l-1}g\cap Sb$ i $H_{l-1}g\setminus Sb$ are blocks that occur in $Y_{l-1}$.\footnote{The role of item 2 is the same as of conditions on the placement of jump points in the proof of theorem 3.1 of \cite{DH2} -- our modification ensures that the boundaries of tiles that are ``replaced'' in subsequent steps of the induction do not accumulate, which would create regions in which we would not control the frequency of ``test'' blocks.} Observe also that $E_{l}$ was chosen so that the coordinates affected by the modifications of this step are all in the set $(E_{l}Sb\setminus Sb)\cup (E_{l}(G\setminus Sb)\cap Sb)$.
    
    Now do the same for $l=k-2,k-3,\ldots,1$. 
\end{enumerate}
\indent The other symbols of $P$ are left unchanged. The inclusion $E_{k-1}Sb\subset S^*$ ensures that the modifications performed in items 2 and 3 affect only coordinates from $S^*$, i.e. the domain of $P$. 

\begin{figure}
    \centering
    \includegraphics[width=11truecm]{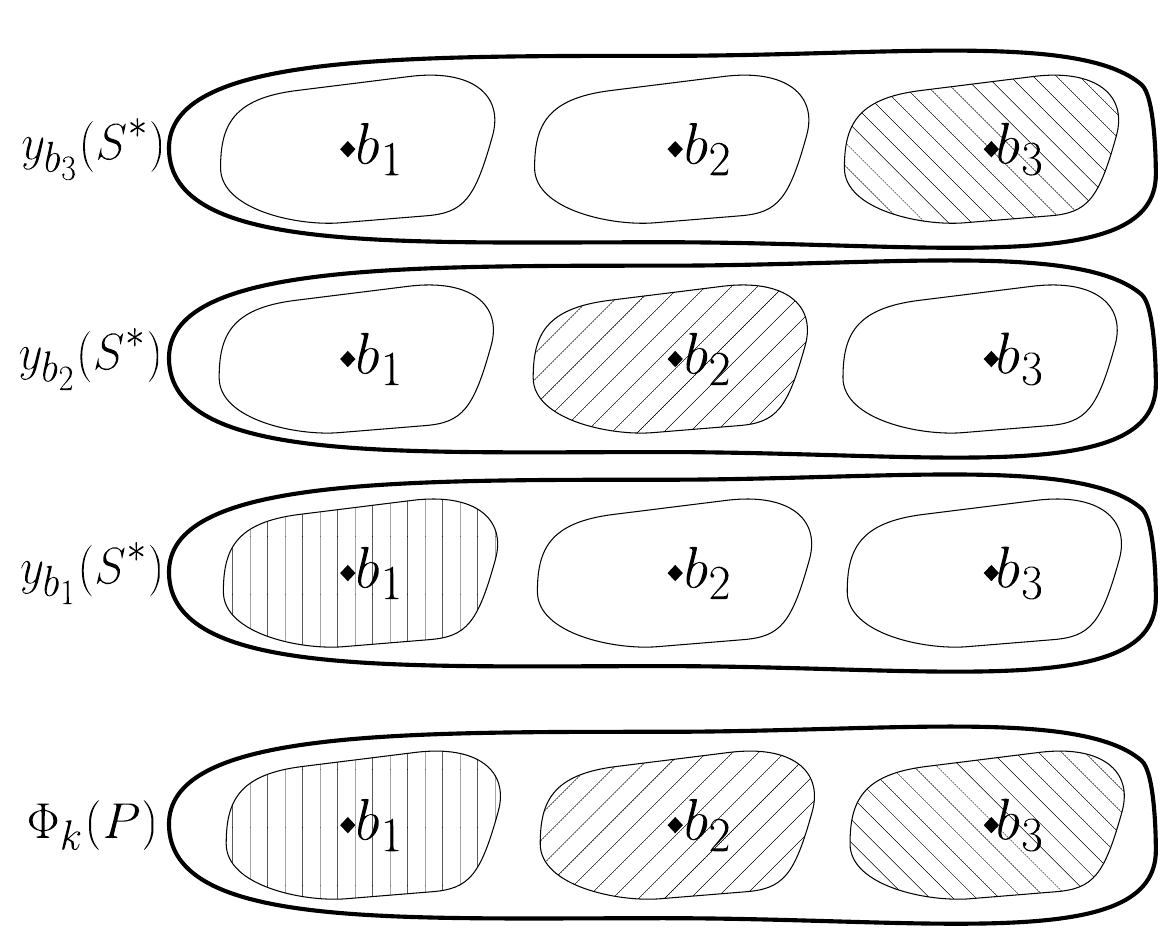}
    \caption{The first step of the code $\Phi_k$ The larger set is $S^*$, and the smaller sets are the tiles of $\CT_k(y_P)$ for $i=1,2,3$, $b_i\in B_k(y_P)$ -- in every such tile $T=S_ib_i$ the block $\Phi_k(P)$ has the contents of the coordinates from $T$ in $y_{b_i}$. For simplicity we show all tiles as having the same shape, but this does not affect the actual construction.}
\end{figure}

\begin{figure}
    \centering
    \includegraphics[width=9truecm]{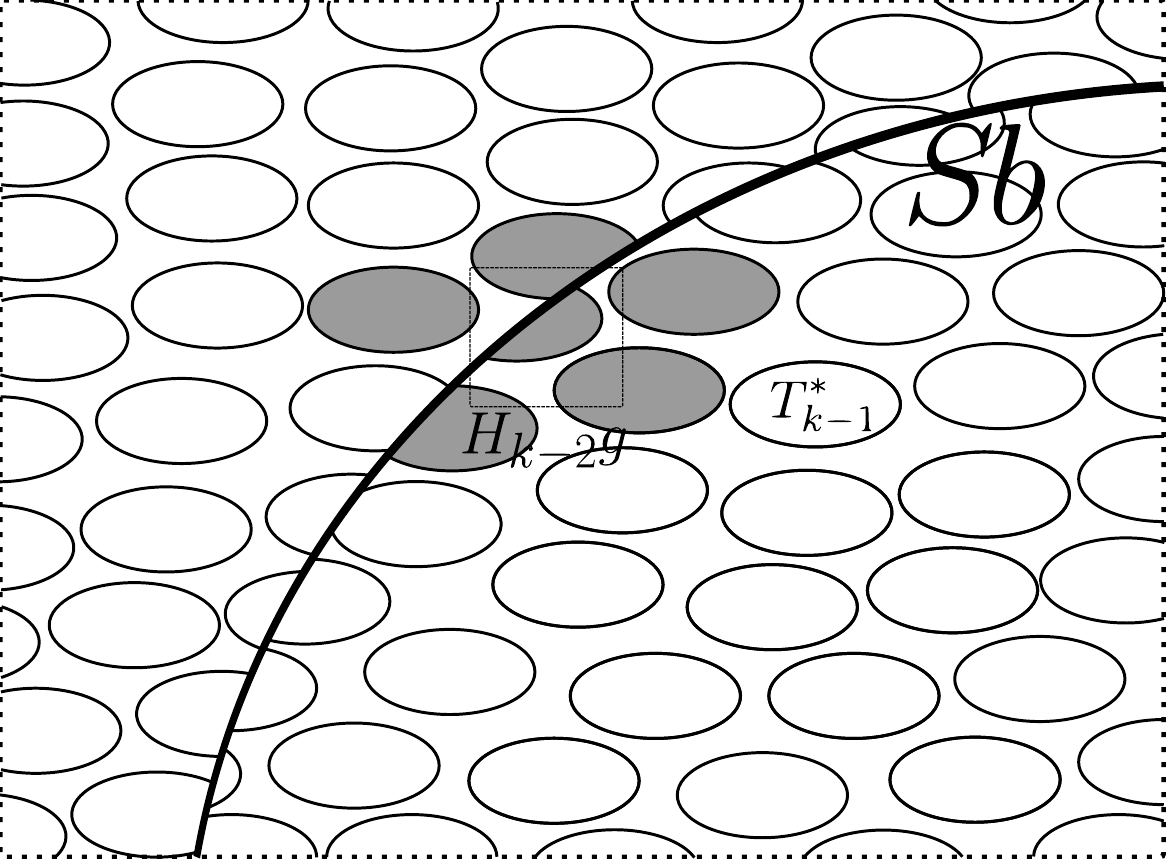}
    \medskip
    
    \includegraphics[width=9truecm]{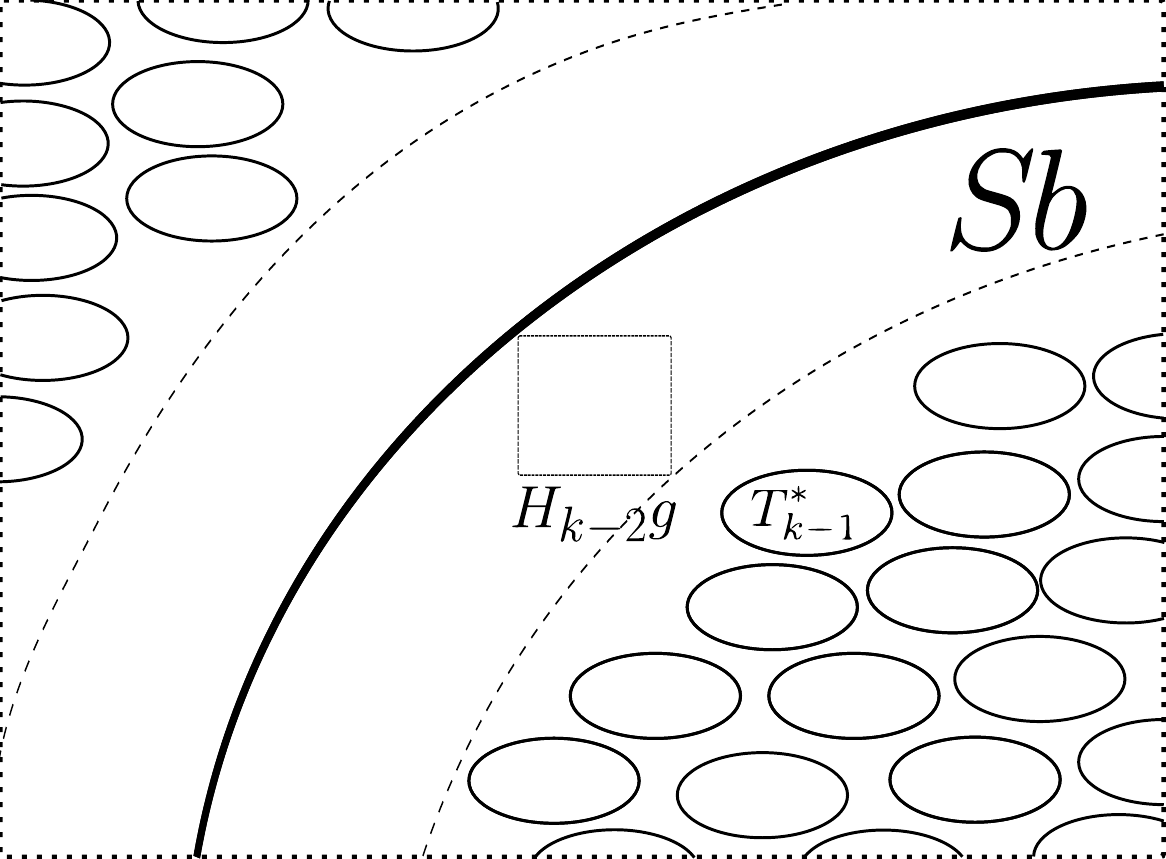}
    \caption{The second step of the code $\Phi_k$ for $l=k-1$. The small square is the set $H_{k-2}g$ that lies on the boundary of $Sb$, whereas the smaller sets are the tiles $T^*_{k-1}$. In every such tile that has nonempty intersection with $H_{k-2}g$ we restore the contents of the corresponding coordinates of $\Phi^{-1}_{k-1}(y_b)$ (within $Sb$) or of $\Phi^{-1}_{k-1}(y_P)$ (outside $Sb$). Observe that any coordinates that are not in some tile of $\CT^*_{k-1}$ have not been changed by $\Phi_{k-1}$, so both parts of $H_{k-2}g$ (the intersection with $Sb$ and with its complement) are blocks that occur in $Y_{k-2}$. The bottom figure depicts $\Phi_k(P)$ after restoring for all $g$ the former content of all the tiles of $\CT^*_{k-1}$ that have nonempty intersection with the set $H_{k-2}g$ lying on the boundary of $Sb$.}
\end{figure}

This block code induces a map on $Y_{k-1}$ defined as follows: For every $y\in Y_{k-1}$ let $\Phi_k(y)$ be the point obtained by replacing, for every tile $S^*b\in \CT^*_k(y)$, the symbols in the tile $y(S^*b)$ with the corresponding symbols from $\Phi_k(y(S^*b))$. The coordinates which are not covered by any tile of $\CT^*_k(y)$ remain unchanged. Since $\CT^*_k$ is a dynamical quasitiling, the map $\Phi_k$ is continuous and commutes with the group action. As $\Phi_k$ makes no changes in rows greater than $j_k$, it is a conjugacy.

Let $Y_k=\Phi_k(Y_{k-1})$. It is easy to see that the sets $Y_k$ and $Y_{k-1}$ are disjoint (no block of the form $\Phi_k(P)$ occurs in $Y_{k-1}$). The quasitilings $\CT_k$ and $\CT_k^*$ were defined on $Y_{k-1}$, so we can transfer them onto $Y_k$ by images, i.e. for every $y\in Y_k$ let $\CT_k(y)=\CT_k(\Phi_k^{-1}(y))$ and $\CT^*_k(y)=\CT^*_k(\Phi_{k}^{-1}(y))$. We will also transfer onto $Y_k$ all the quasitilings $\CT_l$ and $\CT^*_l$ for $l<k$: for $y\in Y_k$ and every tile $T^*$ of $\CT^*_k$, include in $\CT^*_l(y)$ all the elements of $\hat{\CT^*}_l(P)$, where $P$ is the block occurring within the same tile in $\Phi_k^{-1}(y)$ (translating them to the appropriate relative position). In addition, include in $\CT^*_l(y)$ all the tiles of $\CT^*_l(\Phi_{k}^{-1}(y))$ which are not contained within tiles of $\CT^*_k(y)$. The set $B^*_l(y)$ is constructed analogously.

This ends the inductive construction of $Y_k$.
\medskip
We must show that every invariant measure on $Y_k$ is in $\CU_k$. For this it suffices to show that for any ergodic measure $\nu$ on $Y_k$ there exists some $\nu' \in \CP_k$ such that the two measures differ by no more than $\delta_k$ on all cylinders from $\CQ_k$.

Let $P$ be the block determined by coordinates from $S^*$ in $y_P$, as above. We will show that every $Q \in \CQ_k$ occurs in $\Phi_k(P)$ with frequency close to $\nu_P(Q)$, where $\nu_P$ is the only measure on $Y_{k-1}$ whose image by $\pi_{k-1}$ is $\mu_P \times \lambda$. Indeed, let $(x_P,t_{\mu_P})$ be the point chosen when constructing $\Phi_k(P)$. Observe that the $F_{d_k}$-cores of the tiles $T\in \CT_k(y_P)$ contained in $S^*$ cover at least $1-4\eta_k$ of $S^*$ (we use the construction of $S^*$: the sets of the form $E_{k-1}T$, as $T$ ranges over the tiles of $\CT_k$, cover $1-\eta_k$ of $S^*$, and then we can use the invariance of $S$ with respect to $E_{k-1}$ and $F_{d_k}$). Also observe that if $b$ is a centre of a tile $Sb$ of $\CT_k(y_P)$ and $g\in (S)_{E_{k-1}^{-1}F_{d_k}}$, then the block $Q \in \CQ_k$ occurs in $\Phi_k(y_P)$ at coordinate $gb$ if and only if $\Theta^{(X \times I)}_{\mu_P}(b)(x_P,t_{\mu_P})$ is in the set $M_{S,\mu_P}\cap \Theta^{(X \times I)}(g^{-1})\pi_{k-1}(Q)\in \CA'_{\mu_P,g}$. The expression

\[\begin{split}
\frac{1}{\abs{S^*}}\sum_{S\in \CS(\CT_k(y_P))}\bigg(\sum_{b\in \CC(S)\cap (S^*)_{E_{k-1}S_k}}\bigg(\sum_{g \in (S)_{E_{k-1}^{-1}F_{d_k}}}\\
1_{M_{S,\mu_P}\cap \Theta^{(X \times I)}(g^{-1})\pi_{k-1}(Q) }(\Theta^{(X \times I)}_{\mu_P}(b)(x_P,t_{\mu_P}))\bigg)\bigg)
\end{split}\]

is (apart from the division by $\abs{S^*}$) the total number  of occurrences of $Q$ in $\Phi_k(P)$ within the tiles which were modified in the first step of $\Phi_k$, but not in the second, i.e. at coordinates of the form $F_{d_k}gb$ where $T=Sb\in \CT_k(y_P)$, $E_{k-1}T\subset S^*$, and $g\in (S)_{E_{k-1}^{-1}F_{d_k}}$. Such $gb$'s are at least $1-4\eta_k$ of all the elements of $S^*$, so the above expression differs from $\fr_{\Phi_k(P)}(Q)$ by at most $4\eta_k$. For the same reason we can replace cores with whole sets in both sums without changing the value of the expression by more than $4\eta_k$. This modification gives us the expression

\[\frac{1}{\abs{S^*}}\sum_{S\in\CS(\CT_k(y_P))}\sum_{b\in \CC(S)\cap S^*}\sum_{g\in S}
1_{M_{S,\mu_P}\cap \Theta^{(X \times I)}(g^{-1})\pi_{k-1}(Q) }(\Theta^{(X \times I)}_{\mu_P}(b)(x_P,t_{\mu_P})).\]

%%%%% NIGHTMARE

Furthermore, if $b$ is not a center of any tile of $\CT_k(y_P)$, the point $\Theta^{(X \times I)}_{\mu_P}(b)(x_P,t_{\mu_P})$ does not belong to $M_{S,\mu_P}$, since $(x_P,t_{\mu_P})$ visits $M_{S,\mu_P}$ at the same times under the action of $\Theta^{(X \times I)}_{\mu_P}$ as under the action of $\Theta^{(X \times I)}$ (since these visits only depend on the first coordinate), and visits of $(x_P,t_{\mu_P})$ under $\Theta^{(X \times I)}$ in $M_{S,\mu_P}$ are equivalent to visits of $y_P$ in $\set{y:e\in \CC(S)}$. Therefore we can actually sum over all $b\in S^*$ without changing the value of the sum. This gives us the following estimate, where the symbol $\overset{8\eta_k}{\approx}$ and similar indicate that the absolute difference between both sides of the symbol does not exceed the stated number. The second approximation is a consequence of estimate \ref{eq:products_group} and the fact that $S^*$ is a large subset of $F_N$, as stated two paragraphs before said estimate): 

\[\begin{split}\fr_{\Phi_k(P)}(Q)
\overset{8\eta_k}{\approx}\sum_{S\in\CS(\CT_k(y_P))}\sum_{b\in S^*}\sum_{g\in S}1_{M_{S,\mu_P}\cap \Theta^{(X \times I)}(g^{-1})\pi_{k-1}(Q) }(\Theta^{(X \times I)}_{\mu_P}(b)(x_P,t_{\mu_P}))=\\
=\sum_{S\in\CS(\CT_k(y_P))}\sum_{g \in S}\left(\frac{1}{\abs{S^*}}\sum_{b\in S^*}1_{M_{S,\mu_P} \cap \Theta^{(X \times I)}(g^{-1})\pi_{k-1}(Q) }(\Theta^{(X \times I)}_{\mu_P}(b)(x_P,t_{\mu_P})\right)\overset{2\eta_k}{\approx}\\
\overset{2\eta_k}{\approx}\sum_{S\in\CS(\CT_k(y_P))}\sum_{g \in S}(\mu_P \times \lambda) (M_{S,\mu_P} \cap \Theta^{(X \times I)}(g^{-1})\pi_{k-1}(Q))=\\
=\sum_{S\in\CS(\CT_k(y_P))}\sum_{g \in S}(\mu_P \times \lambda) (\Theta^{(X \times I)}(g)(M_{S,\mu_P})\cap \pi_{k-1}(Q) )\overset{*}{=}\\
\overset{*}{=}(\mu_P \times \lambda) \left(\left(\bigcup_{S\in\CS(\CT_k(y_P))}\bigcup_{g \in S}\Theta^{(X \times I)}(g)(M_{S,\mu_P})\right)\cap \pi_{k-1}(Q) \right)\overset{\eta_k}{\approx}\\
\overset{\eta_k}{\approx} (\mu_P \times \lambda)(\pi_{k-1}(Q)),\end{split}\]

where the equality ($*$) and the following estimate are true, because the images of $M_{S,\mu_P}$ by the elements $g\in S, S\in\CS(\CT_k)$ are disjoint and their total measure $\mu_P\times \lambda$ is at least $(1-\eta_k)$. Therefore we have shown that for every $Q\in \CQ_k$
\begin{equation}\abs{\fr_{\Phi_k(P)}(Q)-(\mu_P \times \lambda)(\pi_{k-1}(Q))}<11\eta_k.\end{equation}

Now, let $y\in Y_k$ be generic for $\nu\in \CM_{\Theta^{(Y)}}(Y_k)$. By the properties of $\CT^*_k$ determined two paragraphs before estimate \ref{eq:products_group}, we conclude that the frequency of $Q$ in $y$ (equal to $\nu(Q)$) differs by less than $12\eta_k$ from the value on the set $Q$ of some convex combination (which does not depend on $Q$) of measures of the form $\pi_{k-1}^{-1}(\mu_{P}\times \lambda)$, and such a combination is in $\CP_k$. As long as $\eta_k<\frac{\delta_k}{12}$ (which we can assume), we have $\nu\in \CU_k$. This is equivalent to stating that if $F_n$ is a sufficiently large \Fo set, then for every $y\in Y_k$ there exists a measure $\nu_y \in \CP_k$, which satisfies the following inequality for every $Q\in \CQ_k$: 
\begin{equation}\label{eq:folner_frequency}\abs{\fr_{y(F_n)}(Q)-\nu_y(\pi_{k-1}(Q))}<12\eta_k.\end{equation}

\medskip

We can now define the quasitiling $\CT^+_k$. Let $E'=\bigcup_{l=1}^{k-1}(T_l^+)^2$, where $T_l$ is the union of all the shapes of $\CT_l^+$. Apply lemma \ref{lem:disjoint_quasitiling}, obtaining a disjoint quasitiling $\hat{\CT}_k^+$ whose tiles are $(F_{d_k},\eta_k)$-invariant and corresponding quasitiling by the $E'E^{-1}_{k}$-cores of the tiles is $(1\!-\!\eta_k)$-covering. Let $\CT^+_k=\set{\hat{T}_{E'}:\hat{T}\in\hat{\CT_k^+}}$. Note that if $T\in \CT^+_k$, and $T=\hat{T}_{E'}$ for some $\hat T\in \hat{\CT_k^+}$, then $(T)_{E_{k}^{-1}}=(\hat T)_{E'E^{-1}_{k}}$ is a $(1-\eta_k)$-subset of $\hat{T}$ (as otherwise $((\hat T)_{E'E^{-1}_{k}},C_k)$ would not be a $(1\!-\!\eta_k)$-covering quasitiling), so it is $(F_{d_k},2\eta_k)$-invariant. Furthermore, the definition of $\CT^+_k$ ensures that the first inductive assumption is fulfilled (for $k$): any set of the form $T_lg$ for $l<k,g\in G$ has nonempty intersection with at most one tile of $(T_k,C_k)$.

The final step of the inductive construction is identifying the set $H_k$. This will be an element of the \Fo sequence which is sufficiently large that it satisfies the statement of lemma \ref{lem:small_boundary} for $\CT^+_k$ with  $\eta_k$ in the role of $\eps$. It must also be large enough to ``realize'' the lower Banach density (equal to $1-\eta_k$) of the union of $\CT^+_l$. Then for every $g\in G$ the union of tiles of $\CT^+_l$ that are contained within $H_kg$ is a $(1-2\eta_k)$-subset of $H_kg$. \medskip

It remains to verify the second inductive condition which requires that $Y_k$ have the property Y3($l$) for all $l<k$. Fix $l$ and $y\in Y_k$. The properties of the shapes of $\CT_{l+1}(y)$ (see page \pageref{txt:tilecorrection}) imply that $H_{l}$ lies at the boundary of at most one tile $\CT_{l+1}(y)$. Similarly, it lies on the boundary of at most one tile of any $\CT_{l'}(y)$ for $l'>l$, since this is a property of $H_{l'-1}$ which contains $H_l$ (recall that $H_l$ are unmodified \Fo sets). Finally $H_l$ does not lie simultaneously on the boundaries of two tiles of $\CT_{l'}(y)$ and $\CT_{l''}(y)$ for some $l'\neq l''$ --- this is ensured by the modifications in step 2 of the code construction. Ultimately we see that $H_l$ lies on the boundary of at most one tile of $\CT_{k'}(y)$ where $l<k'\leq k$.

$H_{l}$ is covered in proportion $1-2\eta_{l}$ by tiles of $\CT^+_l$ which are its subsets. Each of these tiles is either a subset of some tile of $\CT_{k'}(y)$ or is disjoint from all such tiles (since $S_{k'}$ was constructed so that no tile of $\CT_l^+$ lies on its boundary -- see page \pageref{txt:tilecorrection}). It follows that every such tile determines a block that occurs in $Y_{l}$, up to coordinates which were modified in step 2 of $\Phi_{k'}$. Let $g$ be one of such coordinates in a tile $T\in\CT^+_l$ (of the form $Sc$) which is disjoint with all tiles of $\CT_{k'}$. (the case where $T$ is a subset of some tile of $\CT_{k'}$ is analogous). Since $g$ was modified, it must belong to $E_{l}T'$ for some $T'\in \CT_{k'}(y)$, so $E_{l}^{-1}g$ has nonempty intersection with $T'$, which in particular implies that it is not a subset of $T$, therefore $g\notin (T)_{E_{l}^{-1}}$. As $(T)_{E_{l}^{-1}}$ is a $(1-\eta_{l})$-subset of $T$, we conclude that the modified coordinates form a small part of $T$. The union of the $E_l^{-1}$-cores of tiles of $\CT^+_l$ contained in $H_{l}$ is a $(1-3\eta_{l})$-subset of $H_{l}$. Furthermore, any such $E_{l}^{-1}$-core is $(F_{d_{l}},\eta_{l})$-invariant, so if $\eta_{l}$ is sufficiently small, the estimate \ref{eq:folner_frequency} (for $l$) and lemma \ref{lem:frequency} give us a measure $\nu_y\in \CP_l$ such that for every $Q\in \CQ_{l}$ we have the following inequality
\begin{equation}
\label{eq:hk_frequency}\abs{\fr_{y(H_{l})}(Q)-\nu_y(\pi_{l}(Q))}<\delta_l,
\end{equation}
which is required by condition Y3($l$).

\medskip
We have constructed a set $\CU_k$ with properties U1-U3, a system $Y_k$ with properties Y1, Y2 and Y3($l$) for all $l<k$, as well as quasitilings $\CT_k(\cdot)$, $\CT^*_k(\cdot))$ and $\CT^+_k$, and the set $H_k$, so the inductive step is complete.
\medskip

Now define the final system

\[Y=\bigcap_{m=1}^\infty\overline{\bigcup_{k=m}^{\infty}Y_k}.\] 

Observe that $Y$ has property Y3($l$) for all $l$. Indeed, let $y\in Y$. There exists some $k>l$ and $y_k\in Y_k$, so that $y(H_l)=y_k(H_l)$. Since $Y_k$ has property Y3($l$), this block satisfies the required condition.

The fact that $Y$ has property Y3($l$) implies that all invariant measures on $Y$ are in $\CU_l$. Indeed, let $y$ be generic for some ergodic $\nu \in \CM_{\Theta^{(Y)}}(Y)$, let $F$ be a far \Fo set and let $Q\in \CQ_l$. Adding the frequencies of $Q$ in $y(H_lg)$ for $H_lg\subset F$ and dividing this sum by $g$, we obtain on one hand a convex combination of the frequencies of $Q$ in the blocks $gy(H_l)$ (a number different by at most $\delta_l$ from the same convex combination of measures $\nu_{gy,l} \in \CP_l$), and on the other -- by a standard argument involving changing the order of summation -- this number will be between $\fr_{y(F_{H_l})}(Q)$ and $\frac{\abs{F}}{\abs{F_{H_l}}}\fr_{y(F)}(Q)$. Ultimately we see that $\nu(Q)$ differs by less than $\delta_l$ from some convex combination of measures in $\CP_l$, and thus $\nu\in \CU_l$.

\medskip

To show that $Y$ is a principal extension of $X$ we need to show that the conditional entropy of $Y$ with respect to $X$ is $0$ for every measure $\nu \in \CM_{\Theta^{(Y)}}(Y)$. For any $k>0$ and for any $k'>k$ we have $h(\nu,\mathcal{R}_{k}|X) \leq h(\nu,\mathcal{R}_{k'}|X)$, since $\mathcal{R}_{k'} \succ \mathcal{R}_{k}$. On the other hand, since $\nu$ is in the set $\CU_{k'}$, using the property U2, we know that $h(\nu,\mathcal{R}_{k'}|X)<\eps_{k'}$. It follows that for any $k'>k$ $h(\nu,\mathcal{R}_{k}|X)<\eps_{k'}$, and thus $h(\nu,\mathcal{R}_{k}|X)=0$. Thus we conclude that $h(\nu|X)=0$.

Similarly, since $\CM_{\Theta^{(Y)}}(Y) \subset \CU_k$ for every $k$, using the property U3, if two invariant measures on $Y$ factor onto the same measure on $X$, then they must be closer to each other than $\eps_k$ for all $k$, and thus every invariant measure on $X$ has exactly one preimage on $Y$. 
\end{proof}
\begin{proof}[Proof of theorem \ref{thm:principal_group}]
    By theorem 6.1 in \cite{DHZ}, for every amenable group $G$ there exists a zero-dimensional dynamical system $(Z,\Theta^{(Z)})$ with entropy zero, such that $\Theta^{(Z)}$ is a free action of $G$ on $Z$. It follows that the system $(X \times Z,\Theta^{(X)}\times \Theta^{(Z)})$ is also a free dynamical system, and as $(Z,\Theta^{(Z)})$ has entropy zero, this product is a principal extension of $(X,\Theta^{(X)})$. We can now apply theorem \ref{thm:faithful_group} to $(X \times Z,\Theta^{(X)}\times \Theta^{(Z)})$, obtaining its principal extension $(Y,\Theta^{(Y)})$. Obviously, the new system is also a principal extension of $(X,\Theta^{(X)})$. It is also a faithful extension of $(X \times Z,\Theta^{(X)}\times \Theta^{(Z)})$, but since the latter can have multiple preimages of some invariant measures on $(X,\Theta^{(X)})$, $(Y,\Theta^{(Y)})$ is not necessarily a faithful extension of $(X,\Theta^{(X)})$.
\end{proof}
% Non-BibTeX users please use


\begin{thebibliography}{}
%
% and use \bibitem to create references. Consult the Instructions
% for authors for reference list style.
%
\bibitem{DH1} T. Downarowicz, D. Huczek, 2013, \emph{Zero-dimensional principal extensions}, Acta Applicandae Mathematica, Acta Appl. Math., \textbf{126}, pp. 117-129
\bibitem{DH2} T. Downarowicz, D. Huczek, 2012, \emph{Faithful zero-dimensional principal extensions}, Studia Mathematica, \textbf{212}, pp. 1-19
\bibitem{DH3} T. Downarowicz, D. Huczek, 2018, \emph{Dynamical quasitilings of amenable groups}, Bull. Polish Acad. Sci. Math, \textbf{66}, pp. 45-55
\bibitem{DHZ} T. Downarowicz, D. Huczek and Guohoua Zhang, 2015, \emph{Tilings of amenable groups}, preprint (arXiv:1502.02413)

\bibitem{FW} M. Foreman, B. Weiss, 2004, \emph{An anti-classification theorem for ergodic measure preserving transformations}, J. Eur. Math. Soc. 6, pp. 277-292
\bibitem{GK} E. Glasner, J. King, 1998, \emph{A zero-one law for dynamical properties}, Topological Dynamics and Applications (Minneapolis, MN, 1995), Contemp. Math. \textbf{215}, Amer. Math. Soc., Providence, RI, pp. 231–242 
\bibitem{GTW} E. Glasner, J.-P. Thouvenot and B. Weiss, 2000, \emph{Entropy theory without a past}, Ergodic Theory and Dynamical Systems, \textbf{20}, pp. 1355-1370
\bibitem{OW} D.S. Ornstein and B. Weiss, 1987, \emph{Entropy and isomorphism theorems for actions of amenable groups}, Journal d'Analyse Math\'ematique, \textbf{48}, pp. 1-141
\bibitem{N}I.~Namioka, 1964, \emph{F\o lner's conditions for amenable semi-groups}, Math. Scand. \textbf{15}, pp. 18-28
\end{thebibliography}
\end{document}